\documentclass[reqno]{amsart}
\usepackage{latexsym}
\usepackage{amssymb,amsthm,amsmath}

\bibliographystyle{plain}

\theoremstyle{plain}
\newtheorem{theorem}{Theorem}
\newtheorem{lemma}[theorem]{Lemma}
\newtheorem{corollary}[theorem]{Corollary}
\newtheorem{proposition}[theorem]{Proposition}

\theoremstyle{definition}
\newtheorem{definition}[theorem]{Definition}
\theoremstyle{remark}
\newtheorem{remark}[theorem]{Remark}

\def\d#1{{#1\kern-0.4em\char"16\kern-0.1em}}
\def\D#1{{\raise0.2ex\hbox{-}\kern-0.4em #1}}
%

\reversemarginpar

\newcounter{zd}


\newcounter{zdr}[subsection]

\newcommand{\eps}{\varepsilon}

\def\F{{\cal F}}
\def\pa{\partial}

\def\cal{\mathcal}
\let\mib=\boldsymbol
\def\ae#1{\;(\hbox{\rm a.e. } #1)}

\def\R{{\bf R}}

\def\N{{\bf N}}

\def\Cbc#1{{{\rm C}^{\infty}_{c}(#1)}}

\def\pC#1#2{{{\rm C}^{#1}(#2)}}

\def\Dup#1#2{\langle#1,#2\rangle}

\def\eps{\varepsilon}


\def\pL#1#2{{{\rm L}^{#1}(#2)}}

\def\malpha{{\mib \alpha}}

\def\mx{{\bf x}}
\def\mxi{{\mib \xi}}

\def\oi#1#2{\langle#1,#2\rangle}
\def\Pd{{\rm P}}
\def\ph{\varphi}

\def\Rd{{{\bf R}^{d}}}

\def\supp{{\rm supp\,}}

\def\u{{\bf u}}
\def\vv{{\bf v}}

\def\Pd{{\rm P}}


\begin{document}


\title{On a generalization of compensated compactness in the $L^p-L^{q}$ setting}

%
%

\author{M.~Mi\v{s}ur}\address{Marin Mi\v{s}ur,
University of Zagreb, Faculty of Science, Bijeni\v{c}ka cesta 30,
10000 Zagreb, Croatia}\email{pfobos@gmail.com}

\author{D.~Mitrovi\'{c}}\address{Darko Mitrovi\'{c},
University of Montenegro, Faculty of Mathematics, Cetinjski put bb,
81000 Podgorica, Montenegro}\email{matematika@t-com.me}

\begin{abstract}
We investigate conditions under which, for two sequences $(\u_r)$
and $(\vv_r)$ weakly converging to $\u$ and $\vv$ in
$L^p(\R^d;\R^N)$ and $L^{q}(\R^d;\R^N)$, respectively, $1/p+1/q \leq
1$, a quadratic form $q(\mx;\u_r,\vv_r)=\sum\limits_{j,m=1}^N q_{j
m}(\mx)u_{j r} v_{m r}$ converges toward $q(\mx;\u,\vv)$ in the
sense of distributions. The conditions involve fractional
derivatives and variable coefficients, and they represent a
generalization of the known compensated compactness theory. The
proofs are accomplished using a recently introduced $H$-distribution
concept. We apply the developed techniques to a nonlinear
(degenerate) parabolic equation.
\end{abstract}


\keywords{$L^p-L^q$ compensated compactness; H-distributions;
non-strictly parabolic equations; weak convergence method}

\subjclass{35A27, 35K55, 46G10, 42B15, 42B30}



\maketitle

\section{Introduction}

The compensated compactness theory proved to be a very useful tool
in investigating problems involving partial differential equations
(both linear and nonlinear). Suppose, for instance, that we aim to
solve a nonlinear partial differential equation which we write
symbolically as $A[u]=f$, where $A$ denotes a given nonlinear
operator. One of usual approaches is to approximate it by a
collection of {\sl nicer} problems $A_r[u_r]=f_r$, where $(A_r)$ is
a sequence of operators which is somehow close to $A$. Then we try
to prove that the sequence $(u_r)$ converges toward a solution to
the original problem $A[u]=f$. In general, it is relatively easy to
obtain weak convergence on a subsequence of $(u_r)$ towards some
function $u$. Due to the nonlinear nature of $A$, this does not mean
that $u$ will represent a solution to the original problem $A[u]=f$.
However, in some cases, the nonlinearity of $A$ can be {\sl
compensated} by certain properties of the sequence $(u_r)$ (see
\cite{AM, Dpe, Lu} and references therein). The theory which
investigates such phenomena is called compensated compactness and it
was introduced in the works of F.~Murat and L.~Tartar \cite{mur1,
Tar}.

The most general version of the classical result of compensated
compactness theory has been recently proved in \cite{pan_aihp}. Let
us briefly recall it. First, we introduce anisotropic Sobolev spaces
$W^{-1,-2;p}(\R^d)$, where $-1$ is with respect to
$x_1,\dots,x_\nu$ and $-2$ is with respect to $x_{\nu+1},\dots,x_d$, as a subset of tempered distributions

$$
\{ u\in {\cal S}':\; \exists v\in L^p(\Rd), \; k{\hat u}=\hat{v} \},
$$
where $k(\mxi_1,\mxi_2) = \sqrt{1+(2\pi|\mxi_1|)^2 +
(2\pi|\mxi_2|)^4}$, $\mxi_1\in\R^\nu$, $\mxi_2\in\R^{d-\nu}$. It is
H\"ormader's class $B_{p,k}$ and the Banach space with dual
$B_{p',1/k}$ (see chapter 10 of \cite{Hor}). By $\hat{u}$ we denote
the Fourier transform: ${\hat u}(\mxi) = \int_{\R^d}e^{-2\pi i
\mx\cdot\mxi}u(\mx) d\mx$.

Assume that the sequence
$(\u_r)=(u_{1r},\dots,u_{Nr})$ is bounded in $L^p(\R^d;\R^N)$,
$2\leq p \leq\infty$, and converges in ${\cal D}'(\R^d)$ to a
vector function $\u$. Let $q=\frac{p}{p-1}$ if
$p<\infty$, and $q>1$ if $p=\infty$. Assume that the sequences
\begin{equation}
\label{(2)} \sum\limits_{j=1}^N \sum\limits_{k=1}^\nu \pa_{x_k}(a_{s
j k} u_{j r})+ \sum\limits_{j=1}^N \sum\limits_{k,l=\nu+1}^d
\pa_{x_k x_l} (b_{s j kl} u_{j r}),
\end{equation}  for $s=1,\dots,m$, are precompact in the anisotropic Sobolev space
$W^{-1,-2;q}_{loc}(\R^d)$. The (variable) coefficients $a_{s j k}$ and
$b_{s j k l}$ belong to $L^{2\bar{q}}(\R^d)$,
$\bar{q}=\frac{p}{p-2}$ if $p>2$, and to the space $C(\R^d)$ if
$p=2$.

Next, introduce the set
\begin{align}
\label{(3)}
\Lambda(\mx)=\Big\{{\mib\lambda}\in \R^N\big|& \, (\exists \mxi\in \R^d\setminus\{0\})(\forall s=1,\dots,m)\\
& \sum\limits_{j=1}^N\Big(i\sum\limits_{k=1}^\nu a_{s j
k}(\mx)\xi_k-\sum\limits_{k,l=\nu+1}^d b_{s j k l}(\mx)\xi_k\xi_l
\Big)\lambda_{j}=0\ \Big\}. \nonumber
\end{align} Consider the bilinear form on $\R^N$
\begin{equation}
\label{q-form}
q(\mx;{\mib\lambda},{\mib \eta})=Q(\mx){\mib\lambda}\cdot {\mib\eta},
\end{equation}where $Q$ is a symmetric matrix with coefficients
$$q_{j m}\in \begin{cases}
L^{\bar{q}}_{loc}(\R^d), & p>2\\
C(\R^d), & p=2
\end{cases},\quad  j,m=1,\dots,N.$$ Finally, let $q(\mx;\u_r,\u_r)\rightharpoonup \omega$ weakly-$\ast$ in the space of Radon measures.

The following theorem holds
\begin{theorem}
\label{panov} \cite[Theorem 1]{pan_aihp} Assume that
$q(\mx;{\mib\lambda},{\mib\lambda})\geq 0$ for all ${\mib\lambda} \in \Lambda(\mx)$,
a.e. $\mx\in \R^d$. Then $q(\mx;\u(\mx),\u(\mx))\leq \omega$ in the sense of
measures. If  $q(\mx;{\mib\lambda},{\mib\lambda})= 0$ for all ${\mib\lambda} \in
\Lambda(\mx)$, a.e.~$\mx\in \R^d$, then $q(\mx;\u(\mx),\u(\mx))= \omega$.
\end{theorem}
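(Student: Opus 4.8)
The plan is to analyse the oscillations of the sequence through an $H$-distribution and to convert the differential constraints \eqref{(2)} into a support (localization) condition on its symbol. First I would subtract the weak limit: put $\vv_r=\u_r-\u$, so that $\vv_r\rightharpoonup 0$ in $L^p(\R^d;\R^N)$, and expand
\[
q(\mx;\u_r,\u_r)=q(\mx;\u,\u)+2\,q(\mx;\u,\vv_r)+q(\mx;\vv_r,\vv_r).
\]
The cross term converges to $0$ in $\cD'(\R^d)$: indeed $Q\u\in L^q_{\rm loc}(\R^d)$ because $1/\bar q+1/p=1/q$ (the role of $q_{jm}\in L^{\bar q}_{\rm loc}$), while $\vv_r\rightharpoonup 0$ in $L^p$. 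Moreover the constraints \eqref{(2)} evaluated on $\vv_r$ differ from those on $\u_r$ by a fixed distribution built from $\u$, hence remain precompact in $W^{-1,-2;q}_{\rm loc}(\R^d)$. It therefore suffices to show that the weak-$\ast$ limit $\widetilde\omega$ of $q(\mx;\vv_r,\vv_r)$ is a nonnegative measure, and that $\widetilde\omega=0$ when $q\equiv 0$ on $\Lambda$.

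Next I would attach to $(\vv_r)$ an $H$-distribution. Because $p\ge 2$, on each compact set $(\vv_r)$ is bounded in $L^2$ and, with $q=p/(p-1)\le p$, lies in $L^q_{\rm loc}$; pairing $v_{jr}\in L^p$ with $v_{mr}\in L^q_{\rm loc}$ gives, along a subsequence, a positive-semidefinite Hermitian matrix $(\mu_{jm})$ of distributions on $\R^d\times\Sigma$ — the $H$-distribution, reducing to Tartar's $H$-measure when $p=2$ — characterized by
\[
\lim_r\int_{\R^d}\A_\psi(\ph v_{jr})\,\overline{\ph v_{mr}}\,d\mx=\Dup{\mu_{jm}}{\ph^2\otimes\psi},
\]
for $\ph\in C_c(\R^d)$ and $\psi\in C(\Sigma)$, where $\A_\psi$ is the Fourier multiplier with symbol $\psi$ read on the \emph{anisotropic} sphere $\Sigma=\{|\mxi_1|^2+|\mxi_2|^4=1\}$ adapted to the parabolic scaling $(\mxi_1,\mxi_2)\mapsto(t\mxi_1,\sqrt t\,\mxi_2)$, which renders both $i\xi_k$ ($k\le\nu$) and $\xi_k\xi_l$ ($k,l>\nu$) homogeneous of the same degree, matching the weight in the definition of $W^{-1,-2;p}$. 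Taking $\psi\equiv 1$ and folding the coefficient $q_{jm}$ into the spatial variable, the hypothesis that $q(\mx;\vv_r,\vv_r)$ converges to a Radon measure identifies, for $\ph\ge 0$,
\[
\Dup{\widetilde\omega}{\ph}=\sum_{j,m=1}^N\Dup{\mu_{jm}}{\ph\,q_{jm}\otimes 1}.
\]

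The crux is the localization principle. Feeding each precompact combination \eqref{(2)} into the defining relation and letting $r\to\infty$, the operators $\pa_{x_k}$ and $\pa_{x_k x_l}$ contribute, at the symbol level, the factors $i\xi_k$ and $-\xi_k\xi_l$ (up to positive constants), while precompactness in the negative-order space kills the remainder. This yields, for every constraint index $s$ and $\mu$-a.e.\ $(\mx,\xi)$,
\[
\sum_{j=1}^N\Big(i\sum_{k=1}^\nu a_{sjk}(\mx)\xi_k-\sum_{k,l=\nu+1}^d b_{sjkl}(\mx)\xi_k\xi_l\Big)\mu_{jm}(\mx,\xi)=0 .
\]
Hence the range of the matrix $(\mu_{jm}(\mx,\xi))$ is annihilated by every symbol at the frozen $\xi$, so it lies in $\Lambda(\mx)$ by \eqref{(3)}; in other words $\mu$ is carried by fibre directions ${\mib\lambda}\in\Lambda(\mx)$.

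Combining the two facts closes the argument: decomposing the nonnegative Hermitian measure fibrewise into rank-one pieces ${\mib\lambda}\otimes\overline{{\mib\lambda}}$ with ${\mib\lambda}\in\Lambda(\mx)$, on which $q(\mx;{\mib\lambda},{\mib\lambda})=Q(\mx){\mib\lambda}\cdot{\mib\lambda}\ge 0$ by hypothesis, gives $\Dup{\widetilde\omega}{\ph}=\sum_{j,m}\Dup{\mu_{jm}}{\ph\,q_{jm}\otimes 1}\ge 0$ for all $\ph\ge 0$, i.e.\ $\omega\ge q(\mx;\u,\u)$; and if $q\equiv 0$ on $\Lambda$ the same representation forces $\widetilde\omega=0$, hence equality. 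I expect the genuine obstacle to be not this bookkeeping but the two analytic ingredients behind it in the truly $L^p$ (rather than $L^2$) regime: the \emph{first commutation lemma}, namely that multiplication by the variable coefficients $a_{sjk},b_{sjkl}\in L^{2\bar q}$ commutes with the anisotropic multipliers $\A_\psi$ on $L^p$ up to operators that are compact $L^p\to L^p$ (this is what lets the coefficients be frozen in the localization symbol, and it is exactly why $2\bar q$ and $\bar q$ are the sharp integrabilities); and the $L^p$-boundedness of $\A_\psi$ itself, which requires checking that $\psi$ composed with the anisotropic projection meets the Marcinkiewicz--H\"ormander multiplier conditions.
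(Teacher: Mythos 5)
You should know at the outset that the paper you were given contains \emph{no proof} of this statement: Theorem \ref{panov} is quoted verbatim, with attribution, as \cite[Theorem 1]{pan_aihp}, and serves only as the known result that the authors then generalize to the $L^p$--$L^q$ setting. So the only meaningful comparison is with the proof in the cited reference, and at the level of architecture your proposal does reproduce it: subtracting the weak limit (your cross-term computation with $1/\bar q+1/p=1/q$ and the persistence of precompactness of \eqref{(2)} after subtracting the fixed distribution built from $\u$ are both correct), attaching an ultraparabolic $H$-measure on the anisotropic sphere adapted to the weight $k$, deriving a localization principle from \eqref{(2)}, and concluding from positivity of the Hermitian matrix measure together with the consistency condition.

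As a proof, however, the proposal has genuine gaps. First, the two analytic pillars — the first commutation lemma allowing the coefficients, which are merely in $L^{2\bar q}$ (resp.\ $C$ when $p=2$), to be frozen in the symbol, and the Marcinkiewicz-type $L^p$-boundedness of the anisotropic multipliers — are explicitly deferred rather than proved; moreover your identification $\langle\widetilde\omega,\varphi\rangle=\sum_{j,m}\langle\mu_{jm},\varphi\,q_{jm}\otimes 1\rangle$ silently tests $\mu_{jm}$ against functions that are only $L^{\bar q}_{loc}$ in $\mx$, which requires an extension of the bilinear functional of exactly the type this paper proves in Proposition \ref{korolar}. Second, and more seriously, the decisive step ``decompose the nonnegative Hermitian measure fibrewise into rank-one pieces ${\mib\lambda}\otimes\overline{{\mib\lambda}}$ with ${\mib\lambda}\in\Lambda(\mx)$'' is not justified and is wrong as stated: the rank-one directions produced by the spectral decomposition are \emph{complex} vectors $\meta=\malpha+i\mbeta$ lying in the kernel of the symbol $iA(\mx,\mxi)-B(\mx,\mxi)$, where $A_{sj}=\sum_{k\le\nu}a_{sjk}\xi_k$ and $B_{sj}=\sum_{k,l>\nu}b_{sjkl}\xi_k\xi_l$. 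Because this symbol has a genuinely nonzero real part (the second-order terms) \emph{and} imaginary part (the first-order terms), the kernel relations read $A\malpha=B\mbeta$ and $A\mbeta=-B\malpha$, which do \emph{not} imply that $\malpha$ and $\mbeta$ separately belong to the real set $\Lambda(\mx)$ of \eqref{(3)} (that would require $A\malpha=B\malpha=0$). In Tartar's purely first-order setting ($B=0$) this implication is immediate, which is presumably why you assumed it; in the ultraparabolic setting the passage from ``$q\ge 0$ on the real set $\Lambda(\mx)$'' to ``$Q\meta\cdot\overline{\meta}=Q\malpha\cdot\malpha+Q\mbeta\cdot\mbeta\ge 0$ on the complex kernel'' is precisely the delicate point of the cited proof, and it needs a separate argument (the same gap infects your equality case, where you claim the representation ``forces $\widetilde\omega=0$'').
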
  The connection between $q$ and $\Lambda$ given in the previous theorem, we shall call {\em the
consistency condition}.

We would like to formulate and extend the results from Theorem
\ref{panov} to the $L^p-L^{q}$ framework for appropriate (greater
than one) indices $p$ and $q$ where $p<2$. More precisely,
we want to find conditions on two vector-valued sequences $(\u_r)$
and $(\vv_r)$ weakly converging to $\u$ and $\vv$ in $L^p(\R^d)$ and
$L^{q}(\R^d)$, respectively, to ensure that the sequence
$(q(\mx;\u_r,\vv_r))$, where $q$ is the bilinear form from
\eqref{q-form}, satisfies
\begin{equation}
\label{p-p'}
\lim\limits_{r\to \infty} q(\mx;\u_r,\vv_r)=q(\mx;\u,\vv) \ \ {\rm in} \ \ {\cal D}'(\R^d).
\end{equation} Ideally, it should be $1/p+1/q=1$. Due to
technical obstacles (see Remark \ref{rmk_equality}), we are able to
prove \eqref{p-p'} only when $1/p+1/q<1$. However, under additional
assumptions on the sequences $(\u_r)$ and $(\vv_r)$, we are also
able to obtain the optimal $L^p-L^{p'}$-variant of the compensated
compactness. Here and in the sequel, $1/p+1/p'=1$.

This extension will be done in the next section. In the last section we shall
show how to apply this result to a (nonlinear) parabolic type
equation.

\section{The main result}

In order to formulate the $L^p-L^q$ variant of the compensated
compactness, we need $H$-distributions.


They were introduced in \cite{AM} as an extension of the $H$-measure
concept (see \cite{Ger, Tar, ALjfa, LM2} and references therein). Let us recall that $H$-measures describe the
loss of strong precompactness for sequences belonging to $L^p$ for
$p\geq 2$, and they were the basic tool in the mentioned work on
compensated compactness \cite{pan_aihp}. The variant of
$H$-distributions that we are basically going to use is formulated
in \cite{LM3, LM4}. Let us recall its definition.

We need multiplier operators with symbols defined on a manifold
$\Pd$ determined by an $d$-tuple
$\malpha=(\alpha_1,\dots,\alpha_d)\in\R^d_{+}$, where $\alpha_k\in
\N$ or $\alpha_k \geq d$
$$
\Pd=\Big\{\mxi\in \R^d: \; \sum\limits_{k=1}^d
|\xi_k|^{2\alpha_k}=1 \Big\}.
$$ The manifold $\Pd$ is smooth enough and we are able to associate an $L^p$ multiplier to a function defined
on $\Pd$ as follows. We define the projection from $\R^d\backslash
\{0\}$ to $\Pd$ by means of the mapping
$$
\big(\pi_{\Pd}(\mxi)\big)_j=\xi_j\,
\Big(|\xi_1|^{2\alpha_1}+\dots+|\xi_d|^{2\alpha_d} \Big)^{- 1/2
\alpha_j}, \ \ j=1,\dots,d, \ \ \mxi\in \R^d\backslash\{0\}.
$$


Let us now recall the Marcinkiewicz
multiplier theorem \cite[Theorem 5.2.4.]{Gra}, more precisely its
corollary which we provide here:

\begin{corollary}
\label{m1} Suppose that  $\psi\in \pC{d}{\R^d\backslash
\cup_{j=1}^d\{\xi_j= 0\}}$ is a bounded function such that for some
constant $C>0$ it holds
\begin{equation}
\label{c-mar} |\mxi^{\tilde\malpha} \partial^{\tilde\malpha}
\psi(\mxi)|\leq C,\ \
 \mxi\in \R^d\backslash \cup_{j=1}^d\{\xi_j= 0\}
\end{equation}
for  every multi-index
$\tilde\malpha=(\tilde\alpha_1,\dots,\tilde\alpha_d) \in {\bf
N}_0^d$ such that
$|\tilde\malpha|=\tilde\alpha_1+\tilde\alpha_2+\dots+\tilde\alpha_d
\leq d$. Then, the function $\psi$ is an ${\rm L}^p$-multiplier for
$p\in \oi 1\infty$, and the operator norm of ${\cal A}_\psi$ equals
to $C_{d,p}$, where $C_{d,p}$ depends only on $C$, $p$ and $d$.
\end{corollary}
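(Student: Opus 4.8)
The plan is to obtain the statement as a direct specialization of the full Marcinkiewicz multiplier theorem \cite[Theorem 5.2.4.]{Gra}, whose pointwise sufficient condition is a bound of exactly the type \eqref{c-mar}, but imposed only over the multi-indices $\tilde\malpha\in\{0,1\}^d$ (those whose every entry equals $0$ or $1$). The key observation is the elementary set inclusion
$$
\{0,1\}^d\subseteq\bigl\{\tilde\malpha\in{\bf N}_0^d:\ |\tilde\malpha|\le d\bigr\},
$$
since any multi-index with all entries in $\{0,1\}$ has total order $|\tilde\malpha|=\tilde\alpha_1+\dots+\tilde\alpha_d\le d$. Thus the hypothesis of the corollary is in fact strictly stronger than the one required by the theorem, and the conclusion will follow at once.

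First I would check that the smoothness and boundedness requirements already coincide: $\psi$ is assumed to be $C^d$ away from the coordinate hyperplanes $\cup_{j=1}^d\{\xi_j=0\}$, exactly the regularity demanded by \cite[Theorem 5.2.4.]{Gra}, while the choice $\tilde\malpha=0$ in \eqref{c-mar} yields the uniform bound $|\psi(\mxi)|\le C$. Next, for each fixed $\tilde\malpha\in\{0,1\}^d$ the hypothesis \eqref{c-mar} is available because $|\tilde\malpha|\le d$; rewriting it on $\R^d\setminus\cup_{j=1}^d\{\xi_j=0\}$ as
$$
|\partial^{\tilde\malpha}\psi(\mxi)|\le C\,|\xi_1|^{-\tilde\alpha_1}\cdots|\xi_d|^{-\tilde\alpha_d}
$$
recovers precisely the pointwise Marcinkiewicz condition with constant $C$. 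Applying the theorem then gives that $\psi$ is an $L^p$-multiplier for every $p\in\oi 1\infty$, and its quantitative form furnishes the operator-norm estimate for $\A_\psi$ by a constant $C_{d,p}$ depending only on $C$, $p$ and $d$.

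Because the corollary merely tightens the hypothesis of the cited theorem---testing a strictly larger family of derivatives than the minimal $\{0,1\}^d$ family---there is no genuine analytic obstacle in the argument. The only points requiring a little care are bookkeeping ones: confirming that the admissible exponent range and the $C^d$-regularity match, and tracking the dependence of the multiplier norm so that the final constant $C_{d,p}$ indeed depends on nothing beyond $C$, $p$ and $d$, as claimed.
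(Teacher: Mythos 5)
Your proposal is correct and is essentially the paper's own (implicit) argument: the paper states this result without proof as a recalled consequence of the Marcinkiewicz multiplier theorem, and your key observation that $\{0,1\}^d\subseteq\{\tilde\malpha\in{\bf N}_0^d:\ |\tilde\malpha|\le d\}$, so that the hypothesis \eqref{c-mar} is stronger than the pointwise Marcinkiewicz condition, is exactly the intended reduction. The only bookkeeping caveat is bibliographic: in Grafakos the result with the pointwise hypothesis over $\{0,1\}^d$ is the corollary of Theorem 5.2.4 (whose own hypotheses are integral conditions over dyadic rectangles), which is precisely what the paper means by ``its corollary which we provide here,'' so your argument should formally cite that corollary (or add the one-line integration of the pointwise bound over dyadic rectangles) rather than the theorem itself.
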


The following statement holds.

\begin{theorem}
\label{prop_repr} \cite{LM3} Let $(u_n)$ be a bounded sequence in
$L^p (\R^{d})$, $p>1$, and let $(v_n)$ be a bounded sequence of
uniformly compactly supported functions in $L^\infty(\R^d)$ weakly converging to $0$ in
the sense of distributions. Then,
after passing to a subsequence (not relabelled), for any
$\bar{p}\in\langle1,p\rangle$ there exists a continuous bilinear functional $B$
on $L^ {\bar{p}'}({\R^{d}}) \otimes C^{d}(\Pd)$ such that for every
$\varphi\in L^{\bar{p}'}(\R^{d}) $ and $\psi \in C^{d}(\Pd)$ it
holds
\begin{equation}
\label{rev1}
 B(\varphi, \psi)=\lim\limits_{n\to
\infty}\int_{\R^{d}} \varphi(\mx)u_n(\mx) \overline{\big({\cal
A}_{\psi_\Pd} v_n\big)(\mx)} d\mx \,,
\end{equation}
where ${\cal A}_{\psi_\Pd}$ is the (Fourier) multiplier operator on
$\R^d$ associated to $\psi\circ \pi_\Pd$ and $\frac{1}{\bar{p}}+\frac{1}{\bar{p}'}=1$.

The bound of the functional $B$ is equal to $C_u\, C_v \, C_{d,q}$, where $C_u$ is the $L^p$-bound of the sequence $(u_n)$; $C_v$ is the $L^q$-bound of the sequence $(v_n)$ where $\frac{1}{p}+\frac{1}{\bar{p}'}+\frac{1}{q}=1$; and $C_{d,q}$ is the constant from Corollary \ref{m1}.
\end{theorem}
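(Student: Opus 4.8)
The plan is to realize the limiting functional as a limit of the natural finite-level functionals, splitting the argument into a uniform-bound step and a compactness step. First I would introduce, for each $n$, the bilinear form
$$B_n(\varphi,\psi)=\int_{\R^d}\varphi(\mx)\,u_n(\mx)\,\overline{\big({\cal A}_{\psi_\Pd}v_n\big)(\mx)}\,d\mx,$$
defined for $\varphi\in L^{\bar p'}(\R^d)$ and $\psi\in C^d(\Pd)$. The entire statement then reduces to showing that $(B_n)$ admits a subsequence converging pointwise on $L^{\bar p'}(\R^d)\times C^d(\Pd)$ to a bilinear functional $B$ obeying the asserted bound.

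The first key step is the uniform estimate. Since $\tfrac1{\bar p'}+\tfrac1p+\tfrac1q=1$ with $q\in\langle1,\infty\rangle$ (the constraint $\bar p<p$ forces $q>1$), Hölder's inequality in three factors gives
$$|B_n(\varphi,\psi)|\le \|\varphi\|_{L^{\bar p'}}\,\|u_n\|_{L^p}\,\big\|{\cal A}_{\psi_\Pd}v_n\big\|_{L^q}.$$
To control the last factor I would invoke Corollary \ref{m1} applied to the symbol $\psi\circ\pi_\Pd$. The essential point, and the part requiring care, is to verify that $\psi\circ\pi_\Pd$ satisfies the Marcinkiewicz condition \eqref{c-mar} with constant controlled by $\|\psi\|_{C^d(\Pd)}$: since $\pi_\Pd$ is homogeneous of degree $0$ and smooth away from the coordinate hyperplanes, the chain rule together with the $2\alpha_k$-homogeneous structure of $\Pd$ bounds $|\mxi^{\tilde\malpha}\partial^{\tilde\malpha}(\psi\circ\pi_\Pd)(\mxi)|$ by a constant multiple of $\|\psi\|_{C^{|\tilde\malpha|}(\Pd)}$ for all $|\tilde\malpha|\le d$. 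Consequently $\|{\cal A}_{\psi_\Pd}v_n\|_{L^q}\le C_{d,q}\,\|\psi\|_{C^d(\Pd)}\,\|v_n\|_{L^q}$, and combining these bounds yields
$$|B_n(\varphi,\psi)|\le C_u\,C_v\,C_{d,q}\,\|\varphi\|_{L^{\bar p'}}\,\|\psi\|_{C^d(\Pd)}$$
uniformly in $n$, with $C_u=\sup_n\|u_n\|_{L^p}$ and $C_v=\sup_n\|v_n\|_{L^q}$, the latter finite because the $v_n$ are uniformly compactly supported and $L^\infty$-bounded.

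The second step is the extraction of the limit. Both $L^{\bar p'}(\R^d)$ (as $1<\bar p'<\infty$) and $C^d(\Pd)$ (the $C^d$-space over a compact manifold) are separable, so I would fix countable dense subsets $\{\varphi_i\}\subset L^{\bar p'}(\R^d)$ and $\{\psi_j\}\subset C^d(\Pd)$. For each fixed pair $(i,j)$ the scalar sequence $\big(B_n(\varphi_i,\psi_j)\big)_n$ is bounded by the estimate above, so a diagonal argument produces a subsequence (not relabelled) along which $B_n(\varphi_i,\psi_j)$ converges for all $i,j$ simultaneously; call the limit $B(\varphi_i,\psi_j)$. The uniform bound shows $B$ is bilinear and Lipschitz on the dense product set, hence extends uniquely to a continuous bilinear functional on $L^{\bar p'}(\R^d)\times C^d(\Pd)$ with the stated operator bound, and a standard $3\varepsilon$-argument upgrades convergence from the dense pairs to arbitrary $(\varphi,\psi)$, yielding \eqref{rev1}.

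The main obstacle I anticipate is not the compactness argument, which is routine once the uniform bound is secured, but the multiplier estimate for the composed symbol $\psi\circ\pi_\Pd$: one must track the derivatives of $\pi_\Pd$ near the coordinate hyperplanes $\{\xi_k=0\}$, where it fails to be smooth, and confirm that its degree-$0$ homogeneity exactly cancels the weights $\mxi^{\tilde\malpha}$ so that \eqref{c-mar} holds with constant linear in $\|\psi\|_{C^d(\Pd)}$. This is precisely where the hypothesis $\alpha_k\in\N$ or $\alpha_k\ge d$ enters, guaranteeing enough regularity of $\Pd$ to close the estimate.
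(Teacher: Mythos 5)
Your proof is correct and is essentially the expected argument: the paper itself does not reproduce a proof of Theorem \ref{prop_repr} (it quotes the result from \cite{LM3}), and the proof given there follows exactly your two-step scheme --- a uniform bound $|B_n(\varphi,\psi)|\leq C_u\,C_v\,C_{d,q}\,\|\varphi\|_{L^{\bar p'}}\|\psi\|_{C^d(\Pd)}$ obtained from three-exponent H\"older (with $1/\bar p'+1/p+1/q=1$, $q>1$ because $\bar p<p$) combined with the multiplier estimate for $\psi\circ\pi_\Pd$, followed by separability of $L^{\bar p'}(\R^d)$ and $C^d(\Pd)$, a diagonal extraction, and a $3\varepsilon$ density argument to get convergence for all pairs. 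The one step you rightly isolate as delicate --- verifying the Marcinkiewicz condition \eqref{c-mar} for the composed symbol $\psi\circ\pi_\Pd$ with constant controlled by $\|\psi\|_{C^d(\Pd)}$ --- is precisely the content of \cite[Lemma 5]{LM2}, which this paper invokes for that purpose, and your homogeneity/chain-rule sketch, with the hypothesis $\alpha_k\in\N$ or $\alpha_k\geq d$ guaranteeing sufficient smoothness of $|\xi_k|^{2\alpha_k}$, is the right way to establish it.
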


We shall now prove that we can extend the bilinear functional $B$
from the previous theorem to a functional on $L^{\bar{p}'}(\R^{d};
C^d(\Pd))$. We shall need the following theorem a proof of which in the
case of real functionals can be found in \cite{LM3}.

\begin{theorem}
\label{thm1} Let $B$ be a (complex valued) continuous bilinear
functional on $L^{p}(\Rd)\otimes E$, where $E$ is  a separable
Banach space and $p \in \oi 1\infty$. Then $B$  can be extended as a
(complex valued) continuous functional on $L ^{p}(\R^{d};E)$ if and
only if there exists a (nonnegative) function $b\in L^{{p'}}(\Rd)$
such that for every $\psi\in E$ and almost every $\mx\in\Rd$, it
holds
\begin{equation}
\label{cond} | \tilde B \psi(\mx)| \leq b(\mx) \|\psi\|_{E} \,,
\end{equation}
where $\tilde B $ is a bounded linear operator $E \to L^{{p'}}(\Rd)$
defined by $\Dup{\tilde B \psi}\ph =B(\ph, \psi)$, $\varphi \in
L^p(\R^d)$.

\end{theorem}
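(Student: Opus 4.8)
The plan is to establish the two implications separately, starting with the sufficiency of \eqref{cond}, which is the more direct half. Suppose such a nonnegative $b\in\LLp{p'}(\Rd)$ exists. On a simple function $f=\sum_{i=1}^k\chi_{A_i}\psi_i$, with the $A_i\subset\Rd$ pairwise disjoint and $\psi_i\in E$, I would define $\bar B(f)=\sum_{i=1}^k B(\chi_{A_i},\psi_i)=\sum_{i=1}^k\int_{A_i}(\tilde B\psi_i)(\mx)\,d\mx$. Applying \eqref{cond} termwise and using $\Nor{f(\mx)}_E=\Nor{\psi_i}_E$ on $A_i$ yields
\[
|\bar B(f)|\le\sum_{i=1}^k\int_{A_i}|(\tilde B\psi_i)(\mx)|\,d\mx\le\int_{\Rd}b(\mx)\Nor{f(\mx)}_E\,d\mx\le\nor{b}{p'}\,\nor{f}{\LLp{p}(\Rd;E)},
\]
the final step being H\"older's inequality. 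Since $E$ is separable, Bochner's theory ensures that such simple functions are dense in $\LLp{p}(\Rd;E)$, so $\bar B$ extends by continuity to a functional of norm at most $\nor{b}{p'}$; that it restricts to $B$ on the (dense) image of $\LLp{p}(\Rd)\otimes E$ follows by approximating each scalar factor $\ph$ by simple functions.

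For the converse, assume $\bar B$ is a continuous extension with $|\bar B(f)|\le C\nor{f}{\LLp{p}(\Rd;E)}$. Using separability I would fix a sequence $(\psi_n)$ dense in the unit sphere of $E$, select $\LLp{p'}$-representatives of the functions $\tilde B\psi_n$, and put $b_N(\mx)=\max_{1\le n\le N}|(\tilde B\psi_n)(\mx)|$ and $b(\mx)=\sup_{n}|(\tilde B\psi_n)(\mx)|$, both measurable. The heart of the argument is the bound $\nor{b_N}{p'}\le C$. To obtain it I would partition $\Rd$ into the measurable sets $B_n=\{\mx:\ n\text{ is the least index with }|(\tilde B\psi_n)(\mx)|=b_N(\mx)\}$ and choose, measurably, a phase $\theta(\mx)$ of modulus one with $\theta(\mx)(\tilde B\psi_{n})(\mx)=b_N(\mx)\ge0$ on $B_n$. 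Then, for any $g\ge0$ with $\nor{g}{p}\le1$, the function $f(\mx)=\sum_{n=1}^N(g\theta\chi_{B_n})(\mx)\,\psi_n$ lies in the image of $\LLp{p}(\Rd)\otimes E$, satisfies $\Nor{f(\mx)}_E=|g(\mx)|$, and
\[
\int_{\Rd}b_N(\mx)g(\mx)\,d\mx=\sum_{n=1}^N\int_{B_n}\theta(\mx)(\tilde B\psi_n)(\mx)\,g(\mx)\,d\mx=\bar B(f)\le C\nor{g}{p}\le C.
\]
Taking the supremum over all such $g$ gives $\nor{b_N}{p'}\le C$, and monotone convergence yields $b\in\LLp{p'}(\Rd)$ with $\nor{b}{p'}\le C$.

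Finally, I would check that this $b$ satisfies \eqref{cond}. For $\psi$ in the unit sphere, choose $\psi_{n_k}\to\psi$ from the dense sequence; continuity of $\tilde B$ gives $\tilde B\psi_{n_k}\to\tilde B\psi$ in $\LLp{p'}(\Rd)$, so along a subsequence the convergence is pointwise a.e., whence $|(\tilde B\psi)(\mx)|=\lim_k|(\tilde B\psi_{n_k})(\mx)|\le b(\mx)$ for a.e.\ $\mx$, and homogeneity extends this to arbitrary $\psi\in E$. I expect the genuine difficulty to sit in this necessity direction: carrying out the measurable selection of the maximizing index and of the unimodular phase $\theta$, and confirming that the test function $f$ built from them is a bona fide element of $\LLp{p}(\Rd;E)$ on which $\bar B$ is computed by the tensor formula. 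Separability of $E$ is exactly what underwrites both the selection procedure and the Bochner measurability used throughout.
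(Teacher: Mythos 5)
Your proof is correct and follows the same overall skeleton as the paper's: sufficiency by bounding $B$ on simple functions supported on disjoint finite-measure sets plus H\"older, and necessity by taking a countable dense family in the unit sphere, forming finite maxima of the functions $\tilde B\psi_n$, testing the extension against an element of $L^{p}(\R^d;E)$ built from the characteristic functions of the maximizing sets, passing to the limit by monotone convergence, and finishing with a density/continuity argument exactly as in the paper's inequality \eqref{dense}. The one genuine difference is how the complex values are treated. The paper symmetrizes the dense family (adjoining $\psi_{-j}=-\psi_j$) and runs the maximum argument separately on $\Re(\tilde B\psi_j)$ and $\Im(\tilde B\psi_j)$, obtaining $b=b^{\Re}+b^{\Im}$; you instead maximize the moduli $|\tilde B\psi_n|$ directly and absorb the phase into the scalar factor via a measurable unimodular function $\theta$, which leaves $\|f(\mx)\|_E=|g(\mx)|$ unchanged. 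Your variant is slightly cleaner and sharper: it gives $\|b\|_{L^{p'}}\leq C$, the norm of the extension, rather than the constant $2C$ implicit in summing two maxima, and it avoids the real/imaginary bookkeeping. The two points you flag as delicate are indeed the right ones, and both are unproblematic: the least maximizing index and the phase $\theta=\overline{\tilde B\psi_n}/|\tilde B\psi_n|$ (set to $1$ where $\tilde B\psi_n=0$) are measurable selections, and your test function lies in the algebraic tensor product $L^p(\R^d)\otimes E$, so the extension is computed on it by the tensor formula, just as in the paper.
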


\begin{proof}

The proof goes along the lines of the proof of \cite[Theorem
2.1]{LM3} when we separately consider real ($\Re$) and imaginary
($\Im$) parts of the functional $B$ and the operator $\tilde{B}$. Let us briefly
recall it.

Let us  assume that \eqref{cond} holds. In order to prove that $B$
can be extended as a linear functional on $\pL{{p}}{\R^{d};E}$, it
is enough to obtain an appropriate bound on the following dense
subspace  of $\pL{{p}}{\R^{d};E}$:
\begin{equation}
\label{spdense} \Big\{\sum\limits_{j=1}^{N} \psi_j\chi_j(\mx): \;
\psi_j\in E,  N\in \N \Big\}\,,
\end{equation} where $\chi_{i}$ are characteristic functions associated to mutually disjoint, finite measure sets.

For an arbitrary function $g=\sum\limits_{i=1}^{N}\psi_i \chi_i$
from  \eqref{spdense}, the bound follows easily once we notice
that
\begin{align*}
&\big| B(\sum\limits_{j=1}^N \psi_j \chi_j) \big|:=\big|
\sum\limits_{j=1}^N B(\chi_j,\psi_j) \big|= \big|\int_{\R^d}
\sum\limits_{j=1}^N \tilde{B}\psi_j(\mx) \chi_j(\mx) d\mx \big|\\&
\leq \int_{\R^d} b(\mx) \sum\limits_{j=1}^N \chi_j(\mx)
\|\psi_j\|_{E}d\mx \leq \|b\|_{L^{p'}(\R^d)} \|g\|_{L^{p}(\R^d;E)}.
\end{align*}


In order to prove the converse, take a
countable dense set of functions from the unit ball of $E$, and
denote them by $\psi_j$, $j\in \N$. Assume that the functions
$\psi_{-j}:=-\psi_j$ are also in $E$. For each function $\tilde
B\psi_j \in \pL {p'}\Rd$ denote by $D_j$ the corresponding set of
Lebesgue points, and their intersection by $D=\cap_j D_j$.

For any $\mx\in D$ and $k\in\N$ denote
\begin{equation*}
b_k(\mx)=\max\limits_{|j| \leq k} \Re(\tilde B\psi_j) (\mx)
=\sum\limits_{|j|=1}^k  \Re(\tilde B \psi_j)(\mx) \chi_j^k (\mx)\,
\end{equation*} where $\chi^k_{j_0}$ is the characteristic function of set
$X^k_{j_0}$ of all points $\mx\in D$ for which the above maximum is
achieved for $j=j_0$. Furthermore, we can assume that for each $k$
the sets $X^k_{j}$ are mutually disjoint.
 The sequence $(b_k)$ is clearly monotonic sequence of positive
functions,  bounded in $\pL {p'}\Rd$, whose limit (in the same
space) we denote by $b^{\Re}$. Indeed, choose $\varphi \in
L^p(\R^d)$, $g=\sum\limits_{|j|=1}^k \varphi(\mx)\chi_j^k(\mx)
\psi_j \in L^p(\R^d;E)$, and consider:
\begin{align*}
&\int_{\R^d}b_k(\mx) \varphi(\mx)d\mx=\Re \big( \int_{\R^d}\tilde{B} \sum\limits_{|j|=1}^k \psi_j \chi_j^k(\mx) \varphi(\mx)d\mx \big)\\
&=\Re\big( \sum\limits_{|j|=1}^k B(\chi_j^k
\varphi,\psi_j)\big)=\Re\big( B(g) \big)\leq C \|g\|_{L^p(\R^d;E)}
\leq C \|\varphi\|_{L^p(\R^d)},
\end{align*} where $C$ is the norm of $B$ on $(L^p(\R^d;E))'$. Since $\varphi\in L^p(\R^d)$ is arbitrary, we get that $(b_k)$ is bounded in $L^{p'}(\R^d)$.

As $D$ is a set of full measure, for every $\psi_j$ we have
$$
| \Re(\tilde B  \psi_j) (\mx) |\leq b^{\Re}(\mx), \quad \ae {\mx\in
\Rd}.
$$ We are able to obtain a similar bound for the imaginary part of
$\tilde{B}\psi_j$. In other words, there exists $b^{\Im}\in \pL
{p'}\Rd$ such that
$$
| \Im(\tilde B  \psi_j) (\mx) |\leq b^{\Im}(\mx), \quad \ae {\mx\in
\Rd}.
$$

The assertion now follows since \eqref{cond} holds for
$b=b^{\Re}+b^{\Im}$ on the dense set of functions $\psi_j$, $j\in
\N$. For details see \eqref{dense} below. \end{proof}

We need the following lemma which will also be used in the
last section.

\begin{lemma}
\label{r-c} If the real symbol $\psi \in C^d(\Pd)$ of the multiplier
operator ${\cal A}_\psi$ is an even function
($\psi(\mxi)=\psi(-\mxi)$), then for every real $u\in L^p(\R^d)$,
$p>1$, ${\cal A}_\psi(u)$ is a real function for a.e. $\mx\in \R^d$.

If the real symbol $\psi \in C^d(\Pd)$ of the multiplier operator
${\cal A}_\psi$ is an odd function ($\psi(\mxi)=-\psi(-\mxi)$), then
for every real $u\in L^p(\R^d)$, $p>1$, ${\cal A}_\psi (u)$ is a
purely imaginary function for a.e. $\mx\in \R^d$.
\end{lemma}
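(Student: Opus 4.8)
The plan is to work directly on the Fourier side, since the multiplier operator $\mathcal{A}_\psi$ is defined by $\widehat{\mathcal{A}_\psi u}(\mxi) = (\psi\circ\pi_\Pd)(\mxi)\,\hat u(\mxi)$, and to exploit the elementary symmetry properties of the Fourier transform of a real-valued function. Recall that for real $u$ one has the Hermitian relation $\hat u(-\mxi) = \overline{\hat u(\mxi)}$. I would first verify that the symbol inherits the parity of $\psi$ under the projection $\pi_\Pd$: since each component satisfies $(\pi_\Pd(-\mxi))_j = -(\pi_\Pd(\mxi))_j$ (the denominator $|\xi_1|^{2\alpha_1}+\dots+|\xi_d|^{2\alpha_d}$ is even in $\mxi$, while the numerator $\xi_j$ is odd), the full symbol $\Psi := \psi\circ\pi_\Pd$ is even on $\R^d\setminus\{0\}$ whenever $\psi$ is even on $\Pd$, and odd whenever $\psi$ is odd. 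This reduces the statement to a clean assertion about a multiplier whose symbol has a definite parity.

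The core computation is then to check reality or imaginariness via the inverse Fourier transform. Writing
\begin{equation*}
(\mathcal{A}_\psi u)(\mx) = \int_{\R^d} e^{2\pi i \mx\cdot\mxi}\,\Psi(\mxi)\,\hat u(\mxi)\,d\mxi,
\end{equation*}
I would compute its complex conjugate, substitute $\mxi \mapsto -\mxi$, and use both $\overline{\hat u(\mxi)} = \hat u(-\mxi)$ and the parity of $\Psi$. In the even case $\Psi(-\mxi)=\Psi(\mxi)$ and $\Psi$ real-valued (as $\psi$ is a real symbol), the substitution returns exactly $(\mathcal{A}_\psi u)(\mx)$, so $\mathcal{A}_\psi u = \overline{\mathcal{A}_\psi u}$ and the value is real a.e. In the odd case the extra sign from $\Psi(-\mxi)=-\Psi(\mxi)$ yields $\overline{(\mathcal{A}_\psi u)(\mx)} = -(\mathcal{A}_\psi u)(\mx)$, forcing the value to be purely imaginary a.e.

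The main obstacle is that this formal Fourier-inversion argument is only literally valid for nice functions (say Schwartz functions, or $L^1\cap L^2$), whereas the lemma asserts the conclusion for general $u\in L^p(\R^d)$ with $p>1$, where $\hat u$ need not be a function and the integral need not converge absolutely. I would therefore first establish the identity on a dense class, for instance $u\in \mathcal{S}(\R^d)$ or $\Cbc{\R^d}$, where the manipulations are rigorous, and then pass to the limit. By Corollary \ref{m1}, $\Psi$ is an $L^p$-multiplier, so $\mathcal{A}_\psi : L^p(\R^d)\to L^p(\R^d)$ is bounded; approximating $u$ in $L^p$ by real-valued smooth functions $u_k$, we have $\mathcal{A}_\psi u_k \to \mathcal{A}_\psi u$ in $L^p$, hence along a subsequence a.e. Since being real (respectively purely imaginary) is preserved under a.e. convergence, and each $\mathcal{A}_\psi u_k$ is real (respectively purely imaginary) by the dense-class argument, the conclusion carries over to $u$.

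One technical point I would be careful about is that the density of real-valued smooth functions in the real $L^p$ space, together with the $L^p$-boundedness of $\mathcal{A}_\psi$, is exactly what legitimizes the limiting step; the parity of the symbol and the Hermitian symmetry of the Fourier transform do all the real work, and the only subtlety is ensuring the projection $\pi_\Pd$ genuinely transmits the parity of $\psi$ to $\Psi$, which is the first thing I would nail down.
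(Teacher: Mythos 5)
Your proof is correct, and its core computation is the same as the paper's: conjugate, change variables $\mxi\mapsto-\mxi$, and use the Hermitian symmetry $\hat u(-\mxi)=\overline{\hat u(\mxi)}$ of the Fourier transform of a real function together with the parity of the symbol. The difference is in the packaging. The paper argues by duality: it pairs $\mathcal{A}_\psi(u)$ against an arbitrary real $v\in L^2(\R^d)\cap L^p(\R^d)$ and performs the substitution under the Plancherel integral, concluding $\int v\,\mathcal{A}_\psi(u)\,d\mx=\int v\,\overline{\mathcal{A}_\psi(u)}\,d\mx$ for all such $v$, hence reality a.e.; this never requires a pointwise inversion formula, but it proves the claim only for $u\in L^2\cap L^p$ and leaves the extension to general $u\in L^p$ (and the parity of $\psi\circ\pi_\Pd$) implicit. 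You instead verify pointwise the identity on a dense class (Schwartz functions, where $\Psi\hat u\in L^1$ makes the inversion integral absolutely convergent) and then pass to general $u\in L^p$ via $L^p$-boundedness of $\mathcal{A}_\psi$ and a.e. convergence along a subsequence. Your version is somewhat more self-contained: you explicitly check that $\pi_\Pd(-\mxi)=-\pi_\Pd(\mxi)$, so the full symbol $\Psi=\psi\circ\pi_\Pd$ inherits the parity of $\psi$, and you spell out the density/limiting step that the paper's "it is enough to prove for $u,v\in L^2\cap L^p$" glosses over; the paper's duality formulation, on the other hand, is shorter and stays entirely within the $L^2$ Plancherel framework. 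Both are complete modulo the standing fact, used throughout the paper, that $\psi\circ\pi_\Pd$ is an $L^p$-multiplier.
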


\begin{proof} Assume first that the symbol $\psi$ is an even function. It is
enough to prove that, for arbitrary real functions $u,v \in
L^2(\R^d) \cap L^p(\R^d)$, it holds
\begin{align*}
\int v {\cal A}_{\psi}(u) d\mx=\int v \overline{{\cal A}_{\psi}(u)}
d\mx.
\end{align*} This follows from the Plancherel theorem, and the
change of variables $\mxi \mapsto -\mxi$. Indeed,

\begin{align*}
\int v {\cal A}_{\psi}(u) d\mx &=\int \overline{v} {\cal
A}_{\psi}(u) d\mx=\int \psi(\mxi) \overline{\hat{v}}(\mxi)
\hat{u}(\mxi)d\mxi=\left(\mxi\mapsto -\mxi \right)\\
&=\int \psi(\mxi) {\hat{v}}(\mxi) \overline{\hat{u}}(\mxi)d\mxi=\int
v \overline{{\cal A}_{\psi}(u)} d\mx.
\end{align*}

The proof is the same when the symbol is odd. \end{proof}

Now, we can prove the following proposition.

\begin{proposition} \cite{LM3}
\label{korolar} The bilinear functional $B$ defined in Theorem
\ref{prop_repr} can be extended by continuity to a functional on
$L^{\bar{p}'}(\R^{d}; C^d(\Pd))$. The bound of the extension is
equal to the bound of the bilinear functional $B$ (with the
notations of Theorem \ref{prop_repr}, it is $C_u \,C_v \,C_{d,q}$,
$1/p+1/\bar{p}'+1/q=1$).
\end{proposition}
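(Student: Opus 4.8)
The plan is to apply Theorem \ref{thm1} with the exponent $\bar p'$ in place of $p$ and with $E=C^d(\Pd)$, which is a separable Banach space. Since $(\bar p')'=\bar p$, the criterion asks us to produce a nonnegative $b\in L^{\bar p}(\R^d)$ such that the associated operator $\tilde B:C^d(\Pd)\to L^{\bar p}(\R^d)$, $\Dup{\tilde B\psi}{\varphi}=B(\varphi,\psi)$, satisfies $|\tilde B\psi(\mx)|\le b(\mx)\,\|\psi\|_{C^d(\Pd)}$ for a.e.\ $\mx$ and every $\psi$; once such $b$ is found, Theorem \ref{thm1} delivers the extension to $L^{\bar p'}(\R^d;C^d(\Pd))$ together with the bound $\|b\|_{L^{\bar p}}$. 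First I would record that, by \eqref{rev1}, $\tilde B\psi$ is the weak limit in $L^{\bar p}(\R^d)$ of the products $u_n\,\overline{{\cal A}_{\psi_\Pd}v_n}$; these are bounded in $L^{\bar p}$ because Hölder's inequality (with $\frac1p+\frac1{\bar p'}+\frac1q=1$, hence $\frac1p+\frac1q=\frac1{\bar p}$) together with the uniform multiplier bound of Corollary \ref{m1} gives $\|u_n\,\overline{{\cal A}_{\psi_\Pd}v_n}\|_{L^{\bar p}}\le\|u_n\|_{L^p}\,\|{\cal A}_{\psi_\Pd}v_n\|_{L^q}\le C_u\,C_{d,q}\,C_v\,\|\psi\|_{C^d(\Pd)}$.

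To build $b$ I would mimic the construction in the converse part of Theorem \ref{thm1}: fix a countable set $\{\psi_j\}$ dense in the unit ball of $C^d(\Pd)$, pass to common Lebesgue points, and set $b:=\sup_j|\tilde B\psi_j|$. Continuity of $\tilde B$ and density then propagate the estimate $|\tilde B\psi(\mx)|\le b(\mx)$ from the family to every $\psi$ in the unit ball, so \eqref{cond} holds. Lemma \ref{r-c} enters here to organize the real and imaginary parts through the even/odd decomposition of the symbol, exactly as the functions $b^\Re$ and $b^\Im$ are produced in the proof of Theorem \ref{thm1}. The quantitative heart of the matter is then the bound $\|\max_{j\le k}|\tilde B\psi_j|\|_{L^{\bar p}}\le C_u\,C_v\,C_{d,q}$, uniform in $k$, after which monotone convergence yields $b\in L^{\bar p}$ with the claimed norm.

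I would obtain this uniform bound by duality. Pairing $\max_{j\le k}|\tilde B\psi_j|$ with an arbitrary $0\le\varphi$, $\|\varphi\|_{L^{\bar p'}}\le1$, I would split $\R^d$ into the disjoint sets $X_j^k$ on which the maximum is attained, insert unimodular factors to turn $|\tilde B\psi_j|$ into $\theta_j\tilde B\psi_j$, and push the finite sum through the limit in \eqref{rev1}, arriving at $\lim_n\int u_n\,\varphi\,\overline{V_n^k}$ with $V_n^k=\sum_{j\le k}\overline{\theta_j}\,\chi_j^k\,{\cal A}_{\psi_j}v_n$. The same Hölder inequality reduces everything to controlling $\|V_n^k\|_{L^q}^q=\sum_{j\le k}\int_{X_j^k}|{\cal A}_{\psi_j}v_n|^q$ by $(C_{d,q}C_v)^q$, uniformly in $k$.

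This last step is what I expect to be the \emph{main obstacle}. Because the multiplier operators ${\cal A}_{\psi_j}$ are nonlocal, restricting $({\cal A}_{\psi_j}v_n)$ to the selection set $X_j^k$ does not localize its mass, so the naive per-piece estimate loses a factor growing with $k$. Controlling the glued quantity $\sum_{j\le k}\int_{X_j^k}|{\cal A}_{\psi_j}v_n|^q$ — morally a maximal multiplier estimate $\|\sup_j|{\cal A}_{\psi_j}v_n|\|_{L^q}\lesssim\|v_n\|_{L^q}$ over the family of symbols having uniform Marcinkiewicz constants — with the single constant $C_{d,q}$ furnished by Corollary \ref{m1} is the delicate point, and is exactly where the structure of the manifold $\Pd$ and the uniformity in the multiplier bound must be exploited.
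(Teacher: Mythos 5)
Your overall strategy coincides with the paper's: apply Theorem \ref{thm1} with $E=C^d(\Pd)$, build the dominating function $b$ from a countable dense family $\{\psi_j\}$ (split into even and odd parts via Lemma \ref{r-c}), take pointwise maxima over common Lebesgue points, obtain a bound in $L^{\bar p}(\R^d)$ uniform in $k$, and conclude by monotone convergence and density. However, the step you yourself flag as the ``main obstacle'' is exactly the step that has to be proved, and the route you propose for it --- a maximal multiplier estimate $\|\sup_j|{\cal A}_{\psi_j}v_n|\|_{L^q}\lesssim C_{d,q}\|v_n\|_{L^q}$ over a family of symbols with uniform Marcinkiewicz constants --- is not available. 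Corollary \ref{m1} gives a bound for each fixed symbol, but uniform symbol bounds do not control the associated maximal operator: already for smoothed directional (degree-zero homogeneous) symbols the maximal operator over infinitely many directions is unbounded, so no argument of this type can work with only the uniform constant $C_{d,q}$. As written, your proof therefore has a genuine gap at its quantitative heart.

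The paper closes this gap using an ingredient your sketch never invokes: the hypothesis that $(v_n)$ converges weakly to zero, combined with the first commutation lemma \cite[Lemma 3.1]{AM} (see also \cite[Lemma 2]{LM4}). Concretely, after a discrete H\"older inequality the selection sets enter only through the quantity $\sum_{|j|=1}^k\chi_j^k\,|\phi\,{\cal A}_{\psi_j^e}v_n|^{p'}$; one then replaces the indicators $\chi_j^k$ by smooth approximations $\chi_j^{k,\eps}$ and commutes them \emph{inside} the multiplier operators, $\chi_j^{k,\eps}\phi\,{\cal A}_{\psi_j^e}v_n={\cal A}_{\psi_j^e}\bigl(\chi_j^{k,\eps}\phi\,v_n\bigr)+o_n(1)$ in $L^{p'}(\R^d)$, where the error vanishes as $n\to\infty$ precisely because the commutator is a compact operator and $v_n\rightharpoonup 0$. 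Only after this exchange is the multiplier bound of Corollary \ref{m1} applied to each term, and the disjointness of the supports then acts on $v_n$ itself: $\sum_{|j|=1}^k\|\chi_j^{k}\phi\,v_n\|_{L^{p'}}^{p'}\le\|\phi\,v_n\|_{L^{p'}}^{p'}$, which is uniform in $k$ --- no maximal estimate is needed. This commutation step, resting on the weak convergence of $(v_n)$ (of which your argument uses only boundedness), is the missing idea; without it the factor growing with $k$ that you correctly identify cannot be removed.
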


\begin{remark}
The proof of the proposition can also be found in \cite{LM4}. Since
this paper is still unpublished, we give a slightly different proof
here.
\end{remark}

\begin{proof} We will show that $B$ satisfies conditions of Theorem \ref{thm1}, namely, that there exists a function $b\in L^{\bar
p}(\Rd)$ such that for every $\psi \in C^d(\Pd)$,
$\|\psi\|_{C^d(\Pd)}\leq 1$ and almost every $\mx\in\Rd$ it holds

\begin{equation}
\label{ocjena}
|(\tilde{B}\psi)(\mx)|\leq b(\mx) \|\psi\|_{C^d(\Pd)},
\end{equation}

\noindent where $\tilde B:C^d(\Pd) \to L^{\bar p}(\Rd)$ is a bounded
linear operator defined by $\langle {\tilde B}\psi, \varphi\rangle =
B(\varphi,\psi)$, $\varphi\in L^{\bar{p}'}(\R^{d})$.

%

We proceed as follows: choose a dense countable set $E$ of functions
$\psi_j$, $j\in \N$, from the set $\{ \psi \in C^d(\Pd)$:
$\|\psi\|_{C^d(\Pd)}\leq 1 \}$. Define functions
$\psi_{-j}(\mxi)=-\psi_j(\mxi)$ and add them to $E$. Moreover, add the
linear combinations of the form
$\psi_j^e(\mxi)=\frac{1}{2}(\psi_j(\mxi)+\psi_{j}(-\mxi))$ and
$\psi_j^o(\mxi)=\frac{1}{2}(\psi_j(\mxi)-\psi_{j}(-\mxi))$ for $j\in {\bf Z}\setminus\{0\}$ to $E$ as well. Remark
that functions $\psi_j^e$ are even, while $\psi_j^o$ are odd
(in the sense of Lemma \ref{r-c}) and that the set $E$ is still countable and dense.

For each $j$ choose a function $\tilde B\psi_j$ from $L^{\bar{p}}(\R^d)$ and
denote by $D_j$ the corresponding set of Lebesgue points (for definiteness, we can take $\tilde B\psi_j$
to be the precise representative of the class (see chapter 1.7. of \cite{Evans})). The set
$D_j$ is of full measure, and thus the set $D=\cap_j D_j$ as well.

 For any $\mx\in D$ and $k\in\N$ denote ($i=\sqrt{-1}$ below)
\begin{align}
\label{bk1} b^e_k(\mx):&=\max\limits_{|j| \leq k} \tilde B\psi^e_j
(\mx) =\sum\limits_{|j|=1}^k  \tilde B \psi^e_j (\mx) \chi_j^k
(\mx) \in \R^+,\\
\label{bk1o} b^o_k(\mx):&=\max\limits_{|j| \leq k} i \tilde
B\psi^o_j (\mx) =\sum\limits_{|j|=1}^k  i \tilde B \psi^o_j (\mx)
\chi_j^k (\mx)\in \R^+,
\end{align} where $\chi^k_{j_0}$ is a characteristic function of the set of all
points  for which the above maximum is achieved for $\psi^e_{j_0}$ ($\psi^o_{j_0}$ respectively) and it has not been achieved
for $\psi^e_j$ ($\psi^o_{j}$ respectively), $-k\leq j < j_0$.

First, note that we can make sure that $\chi^k_j$ have disjoint
supports for fixed $k$: define $\chi_j^k$ to be equal to one on the
set
$$
\Big\{ \mx\in D: (\tilde{B}\psi^e_j)(\mx) = b^e_k(\mx)\ \&\ (\forall l < j) (\tilde{B}\psi^e_l)(\mx) < b^e_k(\mx) \Big\},
$$ and extend it with zero to the whole $\Rd$.

Next, we shall prove that the sequence of functions $(b^e_k)$ is
bounded in $L^{\bar{p}}(\Rd)$. To this effect, take an arbitrary
$\phi\in C_c(\R^{d})$, and denote $K=\supp \phi$. Since $(v_n)$ is a
bounded sequence of uniformly compactly supported functions in
$L^\infty(\R^d)$, it belongs to $L^q(\R^d)$ for every $q\in\langle
1,\infty\rangle$. Since $\bar{p} < p$, we can find $q > 1$ such that
$1/q + 1/{{\bar p}'} = 1/{p'}$. Fix such $q$. Choose $r>1$ such that
$q=r' p'$. Denote by $\chi^{k, \eps}_{j}\in C_c(\R^{d})$,
$j=1,\dots,k$ smooth approximations of characteristic functions from
\eqref{bk1} on $K$ such that (note that $\|\chi^{k}_{j}\|_{L^\infty}\leq 1$)

$$
\|\chi^{k, \eps}_{j}-\chi^k_{j} \|_{L^{\max\{p',r\}}(K)}\leq
\frac{\eps}{2k}.
$$
As before, denote by $C_u$ an $L^
p$ bound of $(u_n)$ and by $C_{v}$ an $L^{q}$ bound of $(v_n)$ .

According to \eqref{bk1} and the definition of operator $\tilde
B$, we have
\begin{align*}
&\big|_{L^{\bar{p}}(\R^d)}\langle b^e_k, \phi
\rangle_{L^{\bar{p}'}(\R^d)}\big| =\Big| \lim\limits_{n\to
\infty}\int_{\R^{d}} \sum\limits_{|j|=1}^k ({\phi} u_n
\chi^k_j) (\mx)( \overline{{\cal A}_{\psi^e_j} v_n)(\mx)} d\mx \Big|
\\&\leq \limsup\limits_{n\to \infty} \int_{\R^{d}}
\left(\sum\limits_{|j|=1}^k |u_n|^p \chi^k_j  \, (\mx) \right)^{1/p}
\left( \sum\limits_{|j|=1}^k \chi_j^k |\phi\, {\cal A}_{\psi^e_j}
v_n|^{p'}\, (\mx)\right)^{1/p'} d\mx
\\&\leq \limsup\limits_{n\to \infty}\Big\|\sum\limits_{|j|=1}^k |u_n|^p
\chi^k_j\Big\|^{1/p}_{L^1(\R^{d})} \Big\|\sum\limits_{|j|=1}^k
\chi^k_j |\phi\, {\cal
A}_{\psi^e_j}v_n|^{p'}\Big\|^{1/p'}_{L^1(\R^{d})} \\
&\leq\limsup\limits_{n\to \infty} \|u_n\|_{L^p(\R^{d})} \,
\Big(\Big\| \sum\limits_{|j|=1}^k (\chi^k_j-\chi^{k, \eps}_j)
|\phi\, {\cal
A}_{\psi^e_j}v_n|^{p'}\Big\|_{L^{1}(\R^{d})} \nonumber\\
&\qquad\qquad\qquad\qquad\qquad\qquad+ \sum\limits_{|j|=1}^k\Big\|
\chi^{k, \eps}_j |\phi\, {\cal
A}_{\psi^e_j}v_n|^{p'}\Big\|_{L^{1}(\R^{d})} \Big)^{1/p'}\nonumber
\end{align*}
\begin{align*}&\leq\limsup\limits_{n\to \infty} \|u_n\|_{L^p(\R^{d})} \,
\Big(\Big\| \sum\limits_{|j|=1}^k (\chi^k_j-\chi^{k, \eps}_j)
|\phi\, {\cal
A}_{\psi^e_j}v_n|^{p'}\Big\|_{L^{1}(\R^{d})} \nonumber\\
&\qquad\qquad\qquad\qquad\qquad\qquad+
\sum\limits_{|j|=1}^k\Big\| \chi^{k, \eps}_j \phi\, {\cal A}_{\psi^e_j}v_n\Big\|^{p'}_{L^{p'}(\R^{d})} \Big)^{1/p'}\\
&\leq  C_u \limsup\limits_{n\to \infty}\Big(\sum\limits_{|j|=1}^k \|
\chi^k_j-\chi^{k, \eps}_j \|_{L^{r}(K)}  \| {\cal A}_{\psi^e_j}(\phi
v_n )\|^{p'}_{L^
{q}(\R^{d})}\\&\qquad\qquad\qquad\qquad\qquad\qquad+\sum\limits_{|j|=1}^k\|{\cal
A}_{\psi^e_j}(\chi^{k, \eps}_j \phi v_n)\|^{p'}_{L^{p'}(\R^{d})}
\Big)^{1/p'},
\end{align*} where in the second step we have used discrete version of H\"{o}lder
inequality and the fact that $|\lim_n a_n| \leq \limsup_n |a_n|$; in the last step we have used a version of the first
commutation lemma \cite[Lemma 3.1]{AM} (see also \cite[Lemma 2]{LM4}) and H\"{o}lder inequality with $1/r + 1/{r'} =
1$ remembering that $r' p'=q$. By means of Corollary 2 and properties of the functions $\chi^{k, \eps}_j$ it
follows
\begin{align*}
&\big|\langle b^e_k, \phi \rangle\big| \leq C_u \limsup
\limits_{n\to \infty}\left(\eps \, C_\phi \, C^{p'}_{q,d}\,\|v_n
\|^{p'}_{L^ {q}(\Rd)}\!+\!C^{p'}_{p',d}\,\sum\limits_{|j|=1}^k
\|\chi^{k, \eps}_j \phi v_n\|^{p'}_{L^
{p'}(\R^{d})}\!\right)^{\!1/p'}\hskip -3mm,
\end{align*} where $C_{p',d}$ is the constant from Corollary 2 (recall that
$\|\psi^e_j\|_{C^d(\Pd)} \leq 1$), while
$C_\phi=\|\phi \|^{p'}_{L^\infty(\Rd)}$.
By letting  $\eps
\to 0$, we conclude
\begin{align*}
\big|\langle b^e_k, \phi \rangle\big| \leq C_u C_{p',d}\limsup
\limits_{n\to \infty} \left(\sum\limits_{|j|=1}^k \|\chi^{k}_j \phi
v_n\|^{p'}_{L^ {p'}(\R^{d})}\right)^{1/p'}
\end{align*} since $\chi^{k, \eps}_j\to \chi^k_j$ in $L^ {p'}(K)$.
Since supports of functions $\chi^k_j$ are disjoint and remembering the choice of $q$, we get
$$
\sum\limits_{|j|=1}^k \|\chi^{k}_j \phi v_n\|^{p'}_{L^ {p'}(\R^{d})}
\leq \|\phi v_n\|^{p'}_{L^ {p'}(\R^{d})}\leq
\left(\|\phi\|_{L^{\bar{p}'}(\Rd)} \|v_n\|_{L^ {q}(\Rd)}\right)^{
p'}\, ,
$$
since $\sum\limits_{|j|=1}^k (\chi^{k}_j)^{p'} =
\sum\limits_{|j|=1}^k \chi^{k}_j \leq 1$. From this, it follows
\begin{equation*}
\big|\langle b^e_k, \phi \rangle\big| \leq
C_u C_{d,p'}C_{v} \|\phi\|_{L^{\bar{p}'}(\Rd)},
\end{equation*} where all the constants on the right hand side do not depend on $k$.
Since $C_c(\Rd)$ is dense in $L^{\bar{p}'}(\Rd)$ we conclude that
the sequence $(b^e_k)$ is bounded in $L^{\bar{p}}(\Rd)$. Noticing that
$(b^e_k)$ is a non-decreasing sequence of positive functions, it
follows from Beppo-Levi's theorem on monotone convergence that its
(pointwise) limit  $b^e$ is an $L^{\bar{p}}(\Rd)$ function.

In the completely same way, we conclude that $(b^o_k)$ converges
toward $b^o \in L^{\bar{p}}(\Rd)$.

The function $b=b^e+b^o$ satisfies \eqref{ocjena} for
$\tilde{B}\psi$ when $\psi=\psi^e_j+\psi^o_{j'}$ for some $j,j'\in
{\bf Z}\setminus\{0\}$. On the other hand, every $\psi\in C^d(\Pd)$ can be represented
as a sum of odd and even functions as follows
$\psi(\mxi)=\frac{1}{2}(\psi(\mxi)+\psi(-\mxi))+\frac{1}{2}(\psi(\mxi)-\psi(-\mxi))$
and we conclude that \eqref{ocjena} holds for any $\psi \in E$. By
continuity, the statement can be generalised to an arbitrary
$\psi\in C^d(\Pd)$: take a sequence $(\psi_n)\subseteq E$ such that
$\psi_n \to \psi$ in $C^d(\Pd)$ and write

\begin{align}
\label{dense} \int_\Rd |(\tilde{B}\psi)(\mx)|\varphi(\mx) d\mx &\leq
\int_\Rd |(\tilde{B}\psi - \tilde{B}\psi_n)(\mx)|\varphi(\mx) d\mx +
\int_\Rd|(\tilde{B}\psi_n)(\mx)|\varphi(\mx) d\mx\\
&\leq o_n(1) + \int_\Rd b(\mx)\varphi(\mx) d\mx, \nonumber
\end{align}for arbitrary $\varphi \in C^\infty_c(\Rd;\R^+_0)$ where we have used continuity of $\tilde{B}$.
Due to arbitrariness of the function $\varphi$, the result follows
from Theorem \ref{thm1}. \end{proof}

%

\begin{remark} Note that if the set $L:=\{ \psi \in
C^d(\Pd)$: $\|\psi\|_{C^d(\Pd)}\leq 1 \}$ were at most countable, we
could have defined $b\in L^{\bar p}(\Rd)$ in the following straightforward
way
$$b(\mx) = {\rm sup}_{\psi\in L}|(\tilde{B}\psi)(\mx)|.$$

However, $L$ is uncountable, so this definition does not necessarily
result in a measurable function. Taking supremum over a countable
dense subset of $L$ would result in a measurable function which may
not be $L^{\bar{p}}$-function.
\end{remark}

Now, we are ready to prove a variant of compensated compactness in
the $L^p-L^q$ framework. Before we proceed, we recall that the dual
of the space $L^p(\R^d;C^d(\Pd))$ is the space
$L^{p'}_{w*}(\R^d;C^d(\Pd)')$ of weakly-$\ast$ measurable functions
$B:\R^d\to C^d(\Pd)'$ such that $\int_{\R^d}\|B(\mx)
\|^{p'}_{C^d(\Pd)'} d\mx$ is finite (for details see \cite[p.
606]{edv}).

We first need to extend the notion of
$H$-distributions from Theorem \ref{prop_repr} as follows.

\begin{theorem}
\label{prop_repr1} Let $(u_r)$ be a sequence of uniformly compactly
supported functions weakly converging to zero in $L^p (\R^{d})$,
$p>1$, and let $(v_r)$ be a bounded sequence of uniformly compactly
supported functions in $L^{q}(\R^d)$, $1/q+1/p<1$, weakly converging
to $0$ in the sense of distributions. Then, after passing to a
subsequence (not relabelled), for any $\bar{p}\in
\langle1,\frac{pq}{p+q}\rangle$ there exists a continuous bilinear
functional $B$ on $L^ {\bar{p}'}({\R^{d}}) \otimes C^{d}(\Pd)$ such
that for every $\varphi\in L^{\bar{p}'}(\R^{d}) $ and $\psi \in
C^{d}(\Pd)$, it holds
\begin{equation}
\label{rev2}
 B(\varphi, \psi)=\lim\limits_{r\to
\infty}\int_{\R^{d}} \varphi(\mx)u_r(\mx) \overline{\big({\cal
A}_{\psi_\Pd} v_r\big)(\mx)} d\mx \,,
\end{equation}
where ${\cal A}_{\psi_\Pd}$ is the (Fourier) multiplier operator on $\R^d$
associated to $\psi\circ \pi_\Pd$.

The bilinear functional $B$ can be continuously extended to a linear functional on $L^{\bar{p}'}(\R^d;C^d(\Pd))$.
\end{theorem}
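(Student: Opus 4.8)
The plan is to mirror the two stages behind Theorem \ref{prop_repr} and Proposition \ref{korolar}, recalibrating the H\"older exponents to the regime $1/p+1/q<1$. Observe first that $\bar p<\frac{pq}{p+q}$ is equivalent to $\frac{1}{\bar p}>\frac 1p+\frac 1q$, hence to the \emph{strict} inequality $\frac{1}{\bar p'}+\frac 1p+\frac 1q<1$; the interval $\langle 1,\frac{pq}{p+q}\rangle$ is nonempty exactly because $\frac 1p+\frac 1q<1$. This slack will play the role that the equality $\frac 1p+\frac{1}{\bar p'}+\frac 1q=1$ played in the $L^\infty$ setting of Theorem \ref{prop_repr}. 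I would first bound the forms $B_r(\varphi,\psi):=\int_{\R^d}\varphi\,u_r\,\overline{({\cal A}_{\psi_\Pd}v_r)}\,d\mx$ uniformly in $r$. By Corollary \ref{m1} the operator ${\cal A}_{\psi_\Pd}$ is bounded on $L^q(\R^d)$ with $\|{\cal A}_{\psi_\Pd}v_r\|_{L^q}\le C_{d,q}\|\psi\|_{C^d(\Pd)}C_v$. All $u_r$ being supported in one fixed compact $K$ of finite measure, so is the integrand, and with $s$ defined by $\frac 1s=\frac{1}{\bar p}-\frac 1p$ one checks $1<s<q$. Generalised H\"older on $K$ and the embedding $L^q(K)\hookrightarrow L^s(K)$ give
\[
|B_r(\varphi,\psi)|\le \|\varphi\|_{L^{\bar p'}}\,\|u_r\|_{L^p}\,|K|^{1/s-1/q}\,\|{\cal A}_{\psi_\Pd}v_r\|_{L^q}\le C_u\,C_v\,C_{d,q}\,|K|^{1/s-1/q}\,\|\varphi\|_{L^{\bar p'}}\,\|\psi\|_{C^d(\Pd)},
\]
uniformly in $r$.

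Since $L^{\bar p'}(\R^d)$ and $C^d(\Pd)$ are separable, I would select countable dense subsets, extract by a diagonal argument a single subsequence along which $B_r(\varphi,\psi)$ converges for all pairs from those subsets, and promote the limit to a continuous bilinear functional $B$ on $L^{\bar p'}(\R^d)\otimes C^d(\Pd)$ satisfying \eqref{rev2} by the equicontinuity coming from the uniform bound. For the second assertion I would apply Theorem \ref{thm1} with the separable Banach space $E=C^d(\Pd)$ and exponent $\bar p'$: it is enough to produce $b\in L^{\bar p}(\R^d)$ with $|(\tilde B\psi)(\mx)|\le b(\mx)\|\psi\|_{C^d(\Pd)}$ for a.e.\ $\mx$. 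This is precisely the content of Proposition \ref{korolar}, and its proof transfers almost verbatim: split each $\psi$ into even and odd parts, invoke Lemma \ref{r-c} so that the corresponding multiplier operators return real, respectively purely imaginary, functions, form the monotone maximal functions $b^e_k,b^o_k$ over a countable dense family, and bound them in $L^{\bar p}$ by testing against $\phi\in C_c(\R^d)$. In that estimate one trades $\chi^{k,\eps}_j{\cal A}_{\psi^e_j}v_n$ for ${\cal A}_{\psi^e_j}(\chi^{k,\eps}_j v_n)$ through the first commutation lemma \cite[Lemma 3.1]{AM}, controls the commutator, and closes with H\"older; the strict inequality $\frac 1p+\frac{1}{\bar p'}+\frac 1q<1$ furnishes at least the room available in the equality case, so each term stays bounded independently of $k$. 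Beppo--Levi's monotone convergence theorem then produces $b=b^e+b^o\in L^{\bar p}(\R^d)$, and Theorem \ref{thm1} yields the extension.

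The main obstacle is this last step, namely manufacturing the dominating function $b\in L^{\bar p}$. The naive candidate $\sup_{\|\psi\|_{C^d(\Pd)}\le1}|(\tilde B\psi)(\mx)|$ need not be measurable, and taking the supremum over a countable dense subset of the unit ball may fail to be $L^{\bar p}$; the even/odd decomposition together with the maximal-function and monotone-convergence device is exactly what repairs both defects, while the commutation lemma is what makes the localised H\"older estimate close.
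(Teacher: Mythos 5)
Your proposal is correct, but it takes a genuinely different route from the paper. The paper never re-proves the hard extension estimate in the $L^p$--$L^q$ regime: instead it truncates, writing $v_r = T_l(v_r) + (v_r - T_l(v_r))$ with $T_l$ the cut-off at height $l$, applies the already established $L^p$--$L^\infty$ results (Theorem \ref{prop_repr} together with Proposition \ref{korolar}) to the pairs $(u_r)$, $(T_l(v_r))$ to obtain H-distributions $\mu_l \in L^{\bar p}_{w*}(\R^d;C^d(\Pd)')$, controls the truncation error by a Chebyshev argument plus interpolation ($\|v_r - T_l(v_r)\|_{L^{\bar q}(K)} \to 0$ uniformly in $r$, for any $\bar q < q$), shows the $\mu_l$ are uniformly bounded in the dual Bochner space, and extracts a weak-$\ast$ limit by Banach--Alaoglu; the extendability to $L^{\bar p'}(\R^d;C^d(\Pd))$ is then inherited for free, since the limit already lives in that dual. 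You instead rebuild everything directly: a uniform bilinear bound via generalised H\"older on the common compact support (using the slack $1/\bar p' + 1/p + 1/q < 1$ and Corollary \ref{m1}), a diagonal extraction, and then a rerun of the maximal-function/commutation machinery of Proposition \ref{korolar} with the exponent $q$ now fixed rather than freely chosen. This works: the needed relations ($q > p'$ so that $r'p' = q$ has a solution $r>1$; the final H\"older step landing in $L^t$ with $t < \bar p'$, repaired by the finite measure of the common support) are exactly where your strict inequality enters, though your write-up only gestures at these adjustments and at the fact that all integrands localise to the fixed compact support of $(u_r)$ and $(v_r)$ (without which the $L^q \hookrightarrow L^t$ and $L^{\bar p'} \hookrightarrow L^t$ embeddings fail globally). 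What each approach buys: the paper's truncation argument is modular and avoids re-verifying the delicate $L^{\bar p}$-domination estimate under new exponents, at the price of introducing the two-parameter limit in $l$ and $r$; yours is self-contained, gives explicit constants of the form $C_u C_v C_{d,\cdot}|K|^{\gamma}$, and makes transparent that the only genuine obstruction at $1/p + 1/q = 1$ is the loss of this exponent slack, which is precisely the point of Remark \ref{rmk_equality}.
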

\begin{proof}

Introduce the truncation operator
\begin{equation}
\label{trunc}
T_l(v)=\begin{cases}
v, & |v|< l\\
0, & |v|\geq l
\end{cases}, \ \ l\in \N,
\end{equation}and rewrite $v_r$ in the form
$$
v_r(\mx)=T_l(v_r)(\mx)+(v_r-T_l(v_r))(\mx),
$$ where $T_l(v_r)$ is understood pointwisely. Notice that
\begin{equation}
\label{uni_lim} \limsup\limits_{l,r\to
\infty}\|v_r-T_l(v_r)\|_{L^1(K)}= 0
\end{equation} for any relatively compact measurable $K\subseteq \R^d$. Indeed, denote by
$$
\Omega_r^l=\{ \mx\in\R^d:\, |v_r(\mx)| > l \}.
$$It holds
\begin{equation}
\label{conv0} \lim\limits_{l\to \infty} \sup\limits_{r\in \N} {\rm
meas}(\Omega_r^l) =0.
\end{equation}The latter follows since $(v_{r})$ is bounded in $L^q(\R^d)$ and
\begin{align*}
&\sup\limits_{r\in \N} \int_{\Rd }|v_{r}(\mx)|^q d\mx
\geq \sup\limits_{r\in \N} \int_{\Omega_r^l} l^q d\mx
\geq l^q  \sup\limits_{r\in \N}{\rm
meas}(\Omega_r^l).
\end{align*} Now, we simply use the H\"{o}lder inequality
$$
\int_K |v_{r}-T_l(v_{r})|dx=\int_{K \cap \Omega_r^l} |v_{r}|dx \leq {\rm meas}(K \cap \Omega_r^l)^{1/q'} \|v_{r}\|_{L^{q}(K)}
$$ and this tends to zero uniformly with respect to $r$ and $l$
according to \eqref{conv0} and the boundedness of $(v_{r})$ in
$L^q(\R^d)$. Thus, \eqref{uni_lim} is proved. Since $(v_r)$, and
therefore $(T_l(v_r))$ are bounded in $L^q(\R^d)$, \eqref{uni_lim}
and interpolation inequalities imply that for any $\bar{q}\in
[1,q\rangle$
\begin{equation}
\label{ruj1} \limsup\limits_{l,r\to
\infty}\|v_r-T_l(v_r)\|_{L^{\bar{q}}(K)}= 0.
\end{equation} Next, denote by $\mu_l$ the $H$-distribution corresponding
to $(u_r)$ and $(T_l(v_r))$ in the sense of Theorem \ref{prop_repr}.
From here and \eqref{uni_lim}, we conclude that we can rewrite the
right-hand side of \eqref{rev2} in the form
\begin{align}
\label{zg1} &\lim\limits_{r\to \infty}\int_{\R^{d}}
\varphi(\mx)u_r(\mx) \overline{\big({\cal A}_{\psi_\Pd}
v_r\big)(\mx)} d\mx\\&\!=\! \lim\limits_{r\to \infty}\left(
\int_{\R^{d}} \varphi(\mx)u_r(\mx) \overline{{\cal A}_{\psi_\Pd}
\big( T_l(v_r)\big)(\mx)} d\mx\!+\!\int_{\R^{d}}
\varphi(\mx)u_r(\mx) \overline{{\cal A}_{\psi_\Pd}
\big(v_r\!-\!T_l(v_r)\big)(\mx)}
d\mx\right)\nonumber\\&=\langle\mu_l,\varphi \psi \rangle+o_{l}(1),
\nonumber
\end{align}where $o_{l}(1)\to 0$ as $l\to \infty$ follows from \eqref{ruj1} and the application of the H\"{o}lder inequality as
follows:
\begin{align*}
&|\int_{\R^{d}} \varphi(\mx)u_r(\mx) \overline{{\cal A}_{\psi_\Pd}
\big(v_r\!-\!T_l(v_r)\big)(\mx)} d\mx| \\&\leq C_{d,\bar{q}}\|
\varphi \|_{L^{\bar{p}'}(\R^d)} \, \|\psi\|_{C^d(P)} \,
\sup\limits_r\| u_r \|_{L^p(\R^d)} \, \sup\limits_r\|
v_r\!-\!T_l(v_r) \|_{L^{\bar{q}}(\R^d)},
\end{align*} where $1/\bar{p}'+1/p+1/\bar{q}=1$ (and obviously $\bar{q}<q$
implying that we can apply \eqref{ruj1}).

Since $\psi\circ \pi_\Pd$ is an $L^{\bar{q}}$-multiplier
(\cite[Lemma 5]{LM2}), by the H\"older inequality used with the
exponents $\bar{p}'$, $p$, and $\bar{q}<q$, we get

\begin{align}
\nonumber \big|\int_{\R^{d}} \varphi(\mx)u_r(\mx)
\overline{\big({\cal A}_{\psi_\Pd} T_l(v_r)\big)(\mx)} d\mx\big|
&\leq C_{d,\bar{q}}\|\varphi\|_{L^{\bar{p}'}(\R^d)}\|u_r\|_{L^p(\R^d)}\|\psi\|_{C^d(P)}\|T_l(v_r)\|_{L^{\bar{q}}(\R^d)} \\
 &\leq  C_u\,
 C_v\,C_{d,\bar{q}}\|\varphi\|_{L^{\bar{p}'}(\R^d)}\|\psi\|_{C^d(P)}
 \nonumber
\end{align} From here, after passing to the limit $r\to\infty$ and using the
continuity of extension from Proposition \ref{korolar}, we conclude
that $(\mu_l)$ is bounded sequence in $(L^{\bar{p}'}(\R^d;C^d(\Pd))'
=L_{w*}^{\bar{p}}(\R^d;C^d(\Pd)')$ (remark that the bound of
$(\mu_l)$ is $C_u\,C_v\,C_{d,\bar{q}}$). Since
$L_{w*}^{\bar{p}}(\R^d;C^d(\Pd)')$ is dual of the Banach space,
according to the Banach-Alaoglu theorem, $(\mu_l)$ admits a
weak-$\ast$ limit $\mu\in L_{w*}^{\bar{p}}(\R^d;C^d(\Pd)')$ along a
subsequence. The functional $\mu$ satisfies \eqref{rev2}.
\end{proof}

\begin{remark}
\label{rmk_equality} In the case $1/p + 1/q = 1$, the same
proof gives us continuous bilinear functional on $C(\R^d)\otimes
C^d(\Pd)$. We cannot use Proposition \ref{korolar} anymore, but using
Schwartz's kernel theorem, we can (only) extend it to a distribution
from ${\cal D}'(\R^d\times\Pd)$. Therefore, our variant of the
compensated compactness is confined on $L^p-L^q$ framework for
$1/p+1/q<1$. However, under additional assumptions, we are able to
prove the result in the optimal case $1/p + 1/q = 1$ (Corollary
\ref{optimal}).
\end{remark}

Before we proceed, let us recall the definition of fractional
derivatives. For $\alpha\in\R^{+}$, we define
$\partial^\alpha_{x_k}$ to be a pseudodifferential operator with a
polyhomogeneous symbol $(2\pi i \xi_k)^\alpha$, i.e.
$$\partial^\alpha_{x_k}u = ((2\pi i \xi_k)^\alpha
\hat{u}(\mxi))\check\;.$$

In the sequel, we shall assume that sequences $(\u_r)$ and $(\vv_r)$
are uniformly compactly supported. This assumption can be removed if
the orders of derivatives $(\alpha_1,\dots,\alpha_d)$ are natural
numbers. Otherwise, since the Leibnitz rule does not hold for
fractional derivatives, the former assumption seems necessary.

Let us now introduce the localisation principle corresponding to an $H$-distribution.

\begin{proposition}
\label{localisation}

Assume that sequences $(\u_r)$ and $(\vv_r)$ are bounded in
$L^p(\R^d;\R^N)$ and $L^q(\R^d;\R^N)$, where $1/p + 1/q < 1$, and
converge toward ${\mib 0}$ and $\vv=(v_1,\dots,v_N)$ in the sense of
distributions.

Furthermore, assume that the sequence $(\u_r)$ satisfies, for every $s
= 1,\dots,M$:
\begin{equation}
\label{fract} G_{rs}:= \sum\limits_{j=1}^{N}\sum\limits_{k = 1}^{d}
\partial^{\alpha_k}_{x_k}
(a_{s j k}u_{j r}) \to 0 \text{ in }
W^{-\alpha_1,\dots,-\alpha_d;p}(\R^d),
\end{equation} where $\alpha_k\in \N$ or $\alpha_k>d$, $k=1,\dots,d$, and $a_{s j
k}\in L^{\bar{s}'}(\R^d)$, $\bar{s}\in \langle 1,\frac{pq}{p+q}\rangle$.

Finally, by $\mu_{j m}$ denote the $H$-distribution (Theorem
\ref{prop_repr1}) corresponding to a pair of subsequences of $(u_{j r})$ and
$(v_{m r}-v_m)$. Then the following relations hold in the sense of distributions for $m=1,\dots,N$, $s =
1,\dots,M$ ($i=\sqrt{-1}$ below)

\begin{equation}
\label{vv2} \sum\limits_{j = 1}^N\sum\limits_{k = 1}^n a_{s j k}
(2\pi i \xi_k)^{\alpha_k}\mu_{j m} = 0.
\end{equation}

\end{proposition}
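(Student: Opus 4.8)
The plan is to pair the differential constraint \eqref{fract} with a sequence built from $(v_{mr}-v_m)$ through a multiplier whose symbol, after integrating the fractional derivatives by parts, reassembles in the limit exactly the $H$-distribution $\mu_{jm}$ tested against the symbol of $\partial^{\alpha_k}_{x_k}$. Since \eqref{vv2} is an identity of distributions on $\Rd\times\Pd$, it is enough to test it against elementary tensors $\varphi(\mx)\psi(\mxi)$ with $\varphi\in\Cbc{\Rd}$ and $\psi\in C^d(\Pd)$. Because $\bar s<\frac{pq}{p+q}$, I first fix $\bar p\in\langle\bar s,\frac{pq}{p+q}\rangle$, so that $a_{sjk}\varphi\in L^{\bar s'}(\Rd)$ is compactly supported and hence lies in $L^{\bar p'}(\Rd)$; this makes $a_{sjk}\varphi$ an admissible first argument for $\mu_{jm}$ in the representation \eqref{rev2}. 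The hypothesis that each $\alpha_k$ is either an integer or $\alpha_k>d$ is used precisely here: it guarantees that the function $\mxi\mapsto(2\pi i\xi_k)^{\alpha_k}\psi(\mxi)$ restricted to $\Pd$ belongs to $C^d(\Pd)$, so that it is at the same time an admissible symbol for $\mu_{jm}$ and a legitimate Marcinkiewicz multiplier in the sense of Corollary \ref{m1}.

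The algebraic heart of the argument is the projection identity. Writing out $\pi_\Pd$, for $\mxi\in\Rd\setminus\{0\}$ one has
$$\big(2\pi i\,(\pi_\Pd(\mxi))_k\big)^{\alpha_k}=(2\pi i\xi_k)^{\alpha_k}\Big(\sum_{l=1}^d|\xi_l|^{2\alpha_l}\Big)^{-1/2},$$
so the full symbol $(2\pi i\xi_k)^{\alpha_k}$ of the fractional derivative, divided by the anisotropic weight $k(\mxi)=\big(1+\sum_{l}(2\pi|\xi_l|)^{2\alpha_l}\big)^{1/2}$ defining $W^{-\alpha_1,\dots,-\alpha_d;p}(\Rd)$, agrees at high frequencies with the value on $\Pd$ of the coordinate symbol. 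Guided by this, I set $\Sigma(\mxi)=\psi(\pi_\Pd(\mxi))/k(\mxi)$ and consider
$$I_r\od\Big\langle G_{rs},\,\varphi\,\overline{{\cal A}_{\Sigma}(v_{mr}-v_m)}\Big\rangle.$$

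I would then evaluate $I_r$ in two ways. First, $G_{rs}\to0$ in $W^{-\alpha_1,\dots,-\alpha_d;p}(\Rd)$ while the test sequence is bounded in the dual $W^{\alpha_1,\dots,\alpha_d;p'}(\Rd)$: applying ${\cal A}_{k}$ and commuting it past $\varphi$ by the first commutation lemma (\cite[Lemma 3.1]{AM}) turns it into $\varphi\,{\cal A}_{\psi_\Pd}(v_{mr}-v_m)+o(1)$, which is supported in $\supp\varphi$ and bounded in $L^q(\Rd)$, hence, since $q>p'$ by $1/p+1/q<1$, bounded in $L^{p'}(\Rd)$; therefore $I_r\to0$. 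Second, moving each $\partial^{\alpha_k}_{x_k}$ onto the second factor by duality and commuting it through the smooth cut-off $\varphi$ (again the commutation lemma, now for the fractional operators, which is where uniform compact support of the sequences enters) gives, using that $(2\pi i\xi_k)^{\alpha_k}\Sigma(\mxi)$ coincides at high frequencies with $[(2\pi i\xi_k)^{\alpha_k}\psi]_\Pd$,
$$I_r=\sum_{j=1}^N\sum_{k=1}^d\int_{\Rd}a_{sjk}\varphi\,u_{jr}\,\overline{{\cal A}_{[(2\pi i\xi_k)^{\alpha_k}\psi]_\Pd}(v_{mr}-v_m)}\,d\mx+o(1).$$
Letting $r\to\infty$ and invoking \eqref{rev2} identifies the right-hand side with $\sum_{j,k}\langle\mu_{jm},a_{sjk}\varphi\cdot(2\pi i\xi_k)^{\alpha_k}\psi\rangle$; comparing the two evaluations yields \eqref{vv2}.

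The main obstacle will be the two commutator estimates performed simultaneously with non-integer order operators and with coefficients $a_{sjk}$ that are merely $L^{\bar s'}$-integrable: one must verify that commuting the fractional operators and the weight ${\cal A}_k$ through the smooth factor $\varphi$ produces only remainders that vanish in the pertinent $L^p$--$L^{p'}$ duality, and that every intermediate pairing respects an admissible H\"older triple. It is exactly this bookkeeping that forces the strict inequality $1/p+1/q<1$ and the freedom to take $\bar p$ strictly below $\frac{pq}{p+q}$. A secondary point is the discrepancy between $\big(\sum_l|\xi_l|^{2\alpha_l}\big)^{-1/2}$ and $1/k(\mxi)$ near the origin, which I would dispose of with a frequency cut-off, the low-frequency contribution being negligible because $v_{mr}-v_m\rightharpoonup0$.
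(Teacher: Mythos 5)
Your overall architecture matches the paper's: pair $G_{rs}$ with a test sequence built from $(v_{mr}-v_m)$ via a multiplier whose symbol is $\psi\circ\pi_\Pd$ divided by the anisotropic weight, kill the left-hand side by strong convergence in $W^{-\alpha_1,\dots,-\alpha_d;p}(\Rd)$ against boundedness in the dual space, use the homogeneity identity $\big(2\pi i(\pi_\Pd(\mxi))_k\big)^{\alpha_k}=(2\pi i\xi_k)^{\alpha_k}\big(\sum_l|\xi_l|^{2\alpha_l}\big)^{-1/2}$ to recognize the surviving term as an $H$-distribution pairing, and dispose of low frequencies with a cutoff. However, there is a genuine gap at precisely the step you flag as ``the main obstacle'': you place the spatial cutoff \emph{outside} the multiplier, testing with $\varphi\,\overline{{\cal A}_\Sigma(v_{mr}-v_m)}$, and you then must commute the positive-order operators ${\cal A}_k$ (in your first evaluation) and $\partial^{\alpha_k}_{x_k}$ (in your second) past $\varphi$. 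The first commutation lemma \cite[Lemma 3.1]{AM} (also \cite[Lemma 2]{LM4}) cannot deliver this: it concerns commutators $[{\cal A}_\psi,\varphi]$ with a \emph{bounded} Marcinkiewicz symbol $\psi$, whereas $k(\mxi)$ and $(2\pi i\xi_k)^{\alpha_k}$ are unbounded symbols. Nor can you expand $\partial^{\alpha_k}_{x_k}(\varphi f)$ by a product rule, since, as the paper remarks just before the proposition, the Leibniz rule fails for fractional derivatives. So the two commutator estimates your plan rests on are exactly the steps that fail with the tools at hand; they would require a second-commutation-lemma-type result for anisotropic fractional operators that is not available here.

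The paper's proof avoids this issue entirely by putting the cutoff \emph{inside}: the test function is $g_{rm}={\cal B}_\psi(\phi v_{mr})$, where ${\cal B}_\psi$ has symbol $(\psi\circ\pi_\Pd)(\mxi)(1-\theta(\mxi))\big(\sum_l|\xi_l|^{2\alpha_l}\big)^{-1/2}$. Then no unbounded symbol ever has to cross the cutoff: the bound for $(g_{rm})$ in $W^{\alpha_1,\dots,\alpha_d;p'}(\Rd)$ (note $p'<q$, so $p'\in\langle1,q]$) follows from \cite[Lemma 5]{LM2} applied directly to $\phi v_{mr}$, and when the fractional derivatives are moved onto $g_{rm}$ by Plancherel they compose with the symbol of ${\cal B}_\psi$ into the bounded Marcinkiewicz symbol $(\psi\circ\pi_\Pd)(1-\theta)(2\pi i\xi_k)^{\alpha_k}\big(\sum_l|\xi_l|^{2\alpha_l}\big)^{-1/2}$ acting on $\phi v_{mr}$ (this is \eqref{zg2}). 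The only commutation then needed, to identify $\lim_r\int a_{sjk}u_{jr}\overline{{\cal A}_{[(2\pi i\xi_k)^{\alpha_k}\psi]_\Pd}(\phi v_{mr})}\,d\mx$ with the pairing $\langle\mu_{jm},a_{sjk}\phi\otimes(2\pi i\xi_k)^{\alpha_k}\psi\rangle$, involves a bounded symbol, where the first commutation lemma genuinely applies. Your argument becomes correct, and essentially identical to the paper's, once you make this single change in where $\varphi$ sits.
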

\begin{proof}
Assume, without loosing any generality, that $\vv={\mib 0}$.
Denote by ${\cal B}_\psi$ the Fourier multiplier operator with the symbol

$$
(\psi\circ\pi_\Pd)(\mxi)\frac{(1-\theta(\mxi))}{\left(|\xi_1|^{2\alpha_1}+\dots+|\xi_d|^{2\alpha_d}\right)^{1/2}},
$$ where $\theta$ is a cutoff function equal to one in a neighborhood of zero.

According to \cite[Lemma 5]{LM2}, for any $\psi\in C^d(\Pd)$ and any
$\hat{s}>1$, the multiplier operator ${\cal B}_{\psi}: L^2(\R^d)\cap L^{\hat{s}}(\R^d) \to
W^{\alpha_1,\dots,\alpha_d;\hat{s}}(\R^d)$
is bounded (with $L^{\hat{s}}$ norm considered on the domain of ${\cal B}_\psi$); notice that the symbol of $\partial_{x_k}^{\alpha_k}\circ\cal{B}_\psi$ given by

$$
(\psi\circ\pi_\Pd)(\mxi)\frac{(1-\theta(\mxi))(2\pi i\xi_k)^{\alpha_k}}{\left(|\xi_1|^{2\alpha_1}+\dots+|\xi_d|^{2\alpha_d}\right)^{1/2}},
$$

\noindent is a smooth, bounded function satisfying conditions of Marcinkiewicz's multiplier theorem (\cite[Theorem IV.6.6']{stein} or Corollary 2 here).

Insert in \eqref{fract} the test function $g_{rm}$ given by:
\begin{equation}
\label{jul0218} g_{rm}(\mx)={\cal B}_{\psi}\bigl(\phi v_{m r} \bigr)(\mx), \ \ m\in \{1,\dots, N\}
\end{equation}
where  $\psi\in \pC d\Pd$ and $\phi\in \Cbc\Rd$. We get
\begin{align}
\label{zg2}
\int_{\R^d} G_{rs} \overline{g_{rm}} d\mx &=\int_{\R^d}\sum_{j = 1}^{N}\sum_{k = 1}^{n}
a_{sj k}u_{j r} \overline{{\cal A}_{(\psi\circ\pi_\Pd)(\mxi)\frac{(1-\theta(\mxi))
(2\pi i\xi_k)^{\alpha_k}}{\left(|\xi_1|^{2\alpha_1}+\dots+|\xi_d|^{2\alpha_d}\right)^{1/2}}}(\phi v_{m r})} d\mx\\
&\nonumber= \int_{\R^d}\sum_{j = 1}^{N}\sum_{k = 1}^{n} a_{sj k}u_{j
r} \overline{{\cal A}_{(\psi\circ\pi_\Pd)(\mxi)
\frac{(2\pi i\xi_k)^{\alpha_k}}{\left(|\xi_1|^{2\alpha_1}+\dots+|\xi_d|^{2\alpha_d}\right)^{1/2}}}(\phi v_{m r})} d\mx \ \\
&\nonumber\quad - \int_{\R^d}\sum_{j = 1}^{N}\sum_{k = 1}^{n} a_{sj
k}u_{j r} \overline{{\cal A}_{(\psi\circ\pi_\Pd)(\mxi)
\frac{\theta(\mxi)(2\pi
i\xi_k)^{\alpha_k}}{\left(|\xi_1|^{2\alpha_1}+\dots+|\xi_d|^{2\alpha_d}\right)^{1/2}}}(\phi
v_{m r})} d\mx.
\end{align}
Due to the boundedness properties of operator ${\cal B}_\psi$
mentioned above and the compact support of $\phi$, the sequence $(g_{rm})$ is bounded in $W^{\alpha_1,\dots,\alpha_d;
t}(\R^d)$ for $t\in \langle1,q]$. Letting $r\to \infty$ in \eqref{zg2}, we get \eqref{vv2}
after taking into account Theorem \ref{prop_repr1} and the strong
convergence of $(G_{rs})$. Note that the second summand in the above
identity goes to $0$ because of the compact support of the function
$\theta$.
\end{proof}


\begin{remark}

\label{rmk_equality2} In the case $1/p + 1/q = 1$, taking into
account Remark \ref{rmk_equality} and coefficients $a_{sjk}$ from
the space $C_0(\R^d)$, we get the same result as in \eqref{vv2} for
distributions $\mu_{jm}$ from ${\cal D}'(\R^d \times \Pd)$.
\end{remark}

We can now formulate conditions under which \eqref{p-p'} holds.  We
call them the strong consistency conditions. They represent a
generalization of the standard consistency conditions given above.

As before, let $\bar{s} \in \langle 1,\frac{pq}{p+q}\rangle$ be a
fixed number for given $p,q>1$. Introduce the set
\begin{align}
\label{strong-c} \Lambda_{{\cal D}}=\Big\{
{\mib\mu}=(\mu_1,\dots,\mu_N)\in
&L_{w*}^{\bar{s}}(\R^d;(C^d(\Pd))')^N:
\\& \sum\limits_{j=1}^N\sum\limits_{k=1}^d (2\pi i \xi_k)^{\alpha_k}
a_{s j k}
  \mu_j=0, \; s=1,\dots,M \Big\},
 \nonumber
\end{align}where the given equality is understood in the sense of $L_{w*}^{\bar{s}}(\R^d;(C^d(\Pd))')$.

Let us assume that
\begin{equation}
\label{r2}
\begin{split}
&\text{{coefficients of the bilinear form $q$ from \eqref{q-form} }}\\
&\text{{belong to the space $L^t(\R^d)$, where $t\geq \bar{s}'$.}}
\end{split}
\end{equation} Remark that since $\bar{s} \in \langle 1,\frac{pq}{p+q}\rangle$ and $t\geq \bar{s}'$, it also must be $1/t + 1/p + 1/q <
1$.

\begin{definition}
\label{def_sc} We say that the set $\Lambda_{{\cal D}}$, bilinear
form $q$ from \eqref{q-form} satisfying \eqref{r2}, and the matrix
${\mib\mu}=[\mu_{j m}]_{j,m=1,\dots,N}$, $\mu_{j m}\in
L_{w\star}^{\bar{s}}(\R^d;(C^d(\Pd))')$ satisfy {\sl the strong
consistency condition} if for every fixed $m\in \{1,\dots,N \}$, the
N-tuple $(\mu_{1m},\dots,\mu_{N m})$ belongs to $\Lambda_{{\cal
D}}$, and it holds
\begin{equation}
\label{cc11}
\sum\limits_{j,m=1}^N \langle \phi q_{jm}  \otimes 1, \mu_{jm}\rangle \geq 0, \ \ \phi\in \Cbc{\R^d;\R^{+}_{0}}.
\end{equation}

\end{definition} Under the given strong consistency condition, we have the following
theorem.

\begin{theorem}
\label{ccpq} Assume that sequences $(\u_r)$ and $(\vv_r)$ are bounded
in $L^p(\R^d;\R^N)$ and $L^{q}(\R^d;\R^N)$, where $1/p+1/q < 1$, and converge toward $\u$ and $\vv$ in the sense of
distributions. Assume that \eqref{fract} holds.

Assume that
$$
q(\mx;\u_r,\vv_r) \rightharpoonup \omega \ \ {\rm in} \ \ {\cal
D}'(\R^d)
$$ for the bilinear form $q$ from \eqref{q-form} satisfying
\eqref{r2}.

If the set $\Lambda_{{\cal D}}$, the bilinear form \eqref{q-form},
and the (matrix of) $H$-distributions ${\mib\mu}$ corresponding to the
sequences $(\u_r-\u)$ and $(\vv_r-\vv)$ satisfy the strong consistency
condition, then it holds
\begin{equation}
\label{concl1}
q(\mx;\u,\vv)\leq \omega \ \ {\rm in} \ \ {\cal D}'(\R^d).
\end{equation}

If in \eqref{cc11} stands equality, then we have equality in \eqref{concl1} as well.

\end{theorem}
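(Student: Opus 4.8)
The plan is to decompose the quadratic form evaluated at the sequences into a "limit" part governed by the product of the weak limits $\u,\vv$ and a "defect" part that is controlled by the $H$-distributions $\mu_{jm}$. Writing $\u_r = \u + (\u_r - \u)$ and $\vv_r = \vv + (\vv_r - \vv)$ and expanding the bilinear form bilinearly, the cross terms should vanish in the distributional limit (because one factor converges weakly to zero and the other weak limit is fixed, and the coefficients $q_{jm}$ are in a suitable $L^t$ space), leaving
\begin{equation*}
\omega = q(\mx;\u,\vv) + \lim_{r\to\infty} q(\mx;\u_r-\u,\vv_r-\vv)
\end{equation*}
in $\mathcal{D}'(\R^d)$. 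Thus the theorem reduces to identifying the defect measure $\nu := \omega - q(\mx;\u,\vv)$ with the action of the $H$-distributions and showing $\nu \geq 0$.

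**Next I would** make the connection between $\nu$ and the matrix $[\mu_{jm}]$ precise. The key point is that the $H$-distribution $\mu_{jm}$ encodes the weak-$\ast$ limit of products $(u_{jr}-u_j)\overline{\mathcal{A}_{\psi}(v_{mr}-v_m)}$ via Theorem \ref{prop_repr1}. To recover $q(\mx;\u_r-\u,\vv_r-\vv)$ itself (i.e.\ the plain product with no multiplier), I would take the trivial symbol $\psi \equiv 1$ on $\Pd$, so that $\mathcal{A}_{\psi_\Pd}$ is (essentially) the identity. Pairing the localisation identity with a test function $\phi \in \Cbc{\R^d;\R^+_0}$ and summing against the coefficients $q_{jm}$, one obtains
\begin{equation*}
\langle \nu, \phi\rangle = \sum_{j,m=1}^N \langle \phi\, q_{jm}\otimes 1,\ \mu_{jm}\rangle,
\end{equation*}
where the tensoring with $1$ reflects evaluating the symbol at the constant function on $\Pd$. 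By hypothesis the $N$-tuples $(\mu_{1m},\dots,\mu_{Nm})$ lie in $\Lambda_{\cal D}$ and satisfy the positivity \eqref{cc11}, so the right-hand side is nonnegative for every admissible $\phi$; hence $\langle\nu,\phi\rangle \geq 0$, which is exactly $q(\mx;\u,\vv)\leq\omega$ in $\mathcal{D}'(\R^d)$. The equality statement follows verbatim by replacing the inequality in \eqref{cc11} with equality.

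**The hard part will be** justifying the reduction rigorously in the low-integrability regime $1/p+1/q<1$ with variable $L^t$ coefficients and, especially, legitimizing the choice $\psi\equiv 1$. The symbol $1$ is constant on $\Pd$, hence certainly in $C^d(\Pd)$, so Theorem \ref{prop_repr1} applies and $\mathcal{A}_{\psi_\Pd}$ with $\psi\circ\pi_\Pd \equiv 1$ is the identity operator; but one must check that the $H$-distribution paired against $\phi q_{jm}\otimes 1$ genuinely reproduces the distributional limit of the bare quadratic form, which requires that $\phi q_{jm}\in L^{\bar s'}(\R^d)$ so that it is an admissible test function in \eqref{rev2}. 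This is where the constraint $t\geq\bar s'$ and the observation $1/t+1/p+1/q<1$ enter: they guarantee that $\phi q_{jm}$ has enough integrability to pair with the $H$-distribution living in $L^{\bar s}_{w\ast}(\R^d;(C^d(\Pd))')$, and simultaneously that the cross terms in the bilinear expansion converge to zero by Hölder's inequality with exponents $\bar s'$, $p$, $q$. The remaining care is to pass from the subsequence along which the $H$-distributions exist to the full defect, using that $\omega$ is already fixed as the weak-$\ast$ limit of $q(\mx;\u_r,\vv_r)$; since every subsequence yields the same limit $\nu$ via the localisation principle, no relabelling ambiguity affects the final inequality.
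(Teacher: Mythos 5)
Your proposal is correct and takes essentially the same approach as the paper: recenter the sequences at their weak limits, use Theorem \ref{prop_repr1} with the constant symbol $\psi\equiv 1$ (this is the paper's identity \eqref{zg3}) to identify the defect $\omega-q(\mx;\u,\vv)$ with $\sum_{j,m=1}^N\langle \phi q_{jm}\otimes 1,\mu_{jm}\rangle$, and then invoke the localisation principle \eqref{vv2} together with the strong consistency condition \eqref{cc11} to conclude nonnegativity (and equality in the equality case). Your explicit remarks on the vanishing of the cross terms and on the admissibility of $\phi q_{jm}$ as a test function, guaranteed by $t\geq \bar{s}'$, simply spell out what the paper compresses into its appeal to bilinearity.
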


\begin{proof}


Let us abuse the notation by denoting $\u_r=\u_r-\u \rightharpoonup
{\mib 0}$ and $\vv_r=\vv_r-\vv \rightharpoonup {\mib 0}$ as $r\to
\infty$.

Remark that, according to Theorem \ref{prop_repr1}, for any non-negative $\phi\in {\cal D}(\R^d)$
\begin{equation}
\label{zg3} \lim\limits_{r\to \infty}\int_{\R^d}
\sum\limits_{j,m=1}^N q_{j m} u_{j r} v_{m r} \phi\ d\mx=\langle
\phi \sum\limits_{j,m=1}^N q_{j m}\otimes 1, \mu_{j m}\rangle,
\end{equation} where $\mu_{jm}$ is a H-distribution corresponding to sequences $u_{j
r}, v_{m r} \rightharpoonup 0$. Since, according to the
localisation principle \eqref{vv2}, for every fixed $m\in
\{1,\dots,N\}$, the $N$-tuple $(\mu_{1m},\dots,\mu_{N m})$ belongs
to $\Lambda_{{\cal D}}$, we conclude from the strong consistency
condition that
$$
\langle \phi \sum\limits_{j,m=1}^N q_{jm} \otimes 1, \mu_{jm}\rangle \geq 0.
$$ From here, \eqref{zg3}, and the fact that (since $q$ is bilinear)

$$
q(\mx;\u_r,\vv_r) \rightharpoonup \omega-q(\mx;\u,\vv)\geq 0 \ \ {\rm in} \ \ {\cal D}'(\R^d),
$$ the statement of the theorem follows.
\end{proof}

If we assume that the sequence $(\vv_n)$ is bounded in
$L^{p'}(\R^d;\R^N)$ and additionally assume that it can be well
approximated by the truncated sequence $(T_l(\vv_n))$, $l\in \N$, we
can state the optimal variant of the compensated compactness as
follows.

\begin{corollary}
\label{optimal}

Assume that

\begin{itemize}
\item sequences $(\u_r)$ and $(\vv_r)$ are bounded in $L^p(\R^d;\R^N)$
and $L^{p'}(\R^d;\R^N)$, where $1/p+1/p' = 1$, and converge toward
$\u$ and $\vv$ in the sense of distributions;

\item for every $l\in \N$, the sequences $(T_l(\vv_r))$ converge weakly in
$L^{p'}(\R^d;\R^N)$ toward ${\bf h}^l$, where the truncation
operator $T_l$ from \eqref{trunc} is understood coordinatewise;

\item there exists a vector valued function ${\bf V}\in L^{p'}(\R^d;\R^N)$
such that $|\vv_r| \leq {\bf V}$ holds coordinatewise for every
$r\in \N$;

\item  \eqref{fract} holds with $a_{skl}\in C_0(\Rd)$ and $q_{jm}\in C(\Rd)$.

\end{itemize}

Assume that
$$
q(\mx;\u_r,\vv_r) \rightharpoonup \omega \ \ {\rm in} \ \ {\cal
D}'(\R^d).
$$

If for every $l\in \N$, the set $\Lambda_{{\cal D}}$, the bilinear
form \eqref{q-form}, and the (matrix of) $H$-distributions ${\mib
\mu}_l$ corresponding to the sequences $(\u_r-\u)$ and
$(T_l(\vv_r)-{\bf h}^l)_r$ satisfy the strong consistency condition,
then it holds
\begin{equation}
\label{concl1opt}
q(\mx;\u,\vv)\leq \omega \ \ {\rm in} \ \ {\cal
D}'(\R^d).
\end{equation}

If in \eqref{cc11} stands equality, then we have equality in
\eqref{concl1} as well.

\end{corollary}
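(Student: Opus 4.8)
The goal is to reduce the optimal $L^p$--$L^{p'}$ statement to the already-proven Theorem \ref{ccpq} (which lives in the strict regime $1/p+1/q<1$) by a truncation argument. The key realization is that for each fixed truncation level $l\in\N$, the sequence $(T_l(\vv_r))_r$ is bounded in $L^\infty(\R^d;\R^N)$ (by definition $|T_l(\vv_r)|\le l$) with uniformly compact support, hence bounded in $L^q(\R^d;\R^N)$ for \emph{every} $q\in\langle1,\infty\rangle$; in particular we may pick some $q$ with $1/p+1/q<1$, so that Theorem \ref{ccpq} applies to the pair $(\u_r)$, $(T_l(\vv_r))_r$. Thus the first move is to fix $l$, apply Theorem \ref{ccpq} to the bilinear form evaluated at the truncated sequence, and obtain an intermediate inequality at the level $l$.

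\textbf{Step 1 (truncated compensated compactness).} For fixed $l$, set $\vv_r^l:=T_l(\vv_r)$ and let $\omega^l$ be the weak-$\ast$ Radon-measure limit of $q(\mx;\u_r,\vv_r^l)$ (extract a subsequence if needed). Since $(\vv_r^l)_r\rightharpoonup {\bf h}^l$ and $(\u_r)\rightharpoonup\u$, and since the strong consistency condition is assumed for the $H$-distributions $\mmu_l$ attached to $(\u_r-\u)$ and $(T_l(\vv_r)-{\bf h}^l)_r$, Theorem \ref{ccpq} yields
\begin{equation}
\label{eq:trunclevel}
q(\mx;\u,{\bf h}^l)\le \omega^l \quad\text{in }{\cal D}'(\R^d).
\end{equation}
I would note here that the coefficient hypotheses $a_{skl}\in C_0(\Rd)$ and $q_{jm}\in C(\Rd)$ are exactly what is needed so that the localisation principle and the strong consistency condition make sense at this level (cf. Remarks \ref{rmk_equality} and \ref{rmk_equality2}).

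\textbf{Step 2 (passage $l\to\infty$).} The remaining work is to send $l\to\infty$ in \eqref{eq:trunclevel} and recover the genuine limits. On the left, since $\vv_r\rightharpoonup\vv$ and the truncations satisfy the domination $|\vv_r|\le{\bf V}$ with ${\bf V}\in L^{p'}$, the dominated-convergence-type control gives ${\bf h}^l\to\vv$ in $L^{p'}(\R^d;\R^N)$ as $l\to\infty$; combined with continuity of $q$ in its second argument (the coefficients $q_{jm}$ being continuous and the first slot $\u$ fixed in $L^p$), one gets $q(\mx;\u,{\bf h}^l)\to q(\mx;\u,\vv)$ in ${\cal D}'$. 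On the right, one must show $\omega^l\to\omega$: testing against a fixed non-negative $\phi\in{\cal D}(\R^d)$, the difference $q(\mx;\u_r,\vv_r)-q(\mx;\u_r,\vv_r^l)=q(\mx;\u_r,\vv_r-T_l(\vv_r))$ is estimated by H\"older using the $L^p$-bound of $(\u_r)$ against $\|\vv_r-T_l(\vv_r)\|_{L^{p'}}$, and the domination $|\vv_r|\le{\bf V}$ forces $\sup_r\|\vv_r-T_l(\vv_r)\|_{L^{p'}(K)}\to0$ as $l\to\infty$ (the tails are dominated by ${\bf V}\mathbf{1}_{\{|{\bf V}|\ge l\}}\to0$ in $L^{p'}$). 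Hence $\langle\omega^l-\omega,\phi\rangle\to0$, and passing to the limit in \eqref{eq:trunclevel} produces \eqref{concl1opt}. The equality case follows identically by keeping track of equalities rather than inequalities throughout.

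\textbf{Main obstacle.} The delicate point is the joint interchange of the limits $r\to\infty$ and $l\to\infty$: the $H$-distribution $\mmu_l$ and the measure $\omega^l$ both depend on $l$, and a priori each is obtained only along an $l$-dependent subsequence. The domination hypothesis $|\vv_r|\le{\bf V}\in L^{p'}$ is precisely the device that makes the convergence $\|\vv_r-T_l(\vv_r)\|_{L^{p'}(K)}\to0$ \emph{uniform in $r$}, which is what lets the two limits be decoupled cleanly; without it one would only have the weaker $L^{\bar q}$-control of Theorem \ref{prop_repr1} for $\bar q<q$, insufficient at the endpoint $q=p'$. I would therefore organize the estimate so that the uniform-in-$r$ tail bound is established first, independently of the $H$-distribution machinery, and only then pass to the limit in the compensated-compactness inequality.
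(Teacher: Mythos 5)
Your proposal is correct and follows essentially the same route as the paper: apply Theorem \ref{ccpq} to the truncated pair $(\u_r)$, $(T_l(\vv_r))_r$ (admissible since the truncations are bounded in every $L^q$) to get $q(\mx;\u,{\bf h}^l)\leq \omega_l$, and then remove the truncation by letting $l\to\infty$, using the domination $|\vv_r|\leq {\bf V}\in L^{p'}$ and dominated convergence to make $\|\vv_r-T_l(\vv_r)\|_{L^{p'}}$ small uniformly in $r$. The only cosmetic difference is organizational: the paper runs a single five-term decomposition under $\limsup_{l\to\infty}\lim_{r\to\infty}$ (with a diagonal extraction so one subsequence works for all $l$), whereas you split the same estimates into two separate convergences, $\omega^l\to\omega$ in ${\cal D}'$ and ${\bf h}^l\to\vv$ strongly in $L^{p'}$ (the latter via weak lower semicontinuity of the norm), which is a legitimate and slightly tidier packaging of the identical ingredients.
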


\begin{proof}
For every $l\in \N$, notice that $(q(\mx;\u_r,T_l(\vv_r)))_r$ is
bounded in $L^p(\R^d)$:
\begin{align*}
\int_\Rd |q(\mx;\u_r,T_l(\vv_r))|^p d\mx &\leq N^{2(p-1)} \sum_{j,m = 1}^N \int_\Rd|q_{jm}|^p |u_{jr}|^p |T_l(v_{mr})|^p d\mx\\
&\leq C_{N,l,p} \max_{j,m}(\|q_{jm}\|^p_{{\rm L}^\infty(K)}
\|u_{jr}\|^p_{{\rm L}^p(K)}),
\end{align*} where $K\subseteq\Rd$ is a compact set (remember that sequences
$(\u_r)$, $(\vv_r)$ are uniformly compactly supported). Therefore,
the sequence $(q(\mx;\u_r,T_l(\vv_r)))$ (we remind that $l$ is
fixed) admits a weak limit in $L^p(\R^d)$ (and thus in ${\cal
D}'(\R^d)$) along a subsequence. Using a diagonal procedure, we can
extract a subsequence (not relabeled) such that for every $l\in\N$
it holds
$$
q(\mx;\u_r,T_l(\vv_r)) \rightharpoonup \omega_l \ \ {\rm in} \ \ {\cal D}'(\R^d).
$$ where $\omega_l$ is a weak limit of $(q(\mx;\u_r,T_l(\vv_r)))_r$.
According to the assumptions of the corollary on the strong
consistency conditions involving ${\mib \mu}_l$ and the sequences
$(\u_r-\u)$ and $(T_l(\vv_r)-{\bf h}^l)_r$, and Theorem \ref{ccpq}
(remark that $(T_l(\vv_r))_r$ is bounded), it holds
\begin{equation}
\label{l+} q(\mx;\u,{\bf h}^l)\leq \omega_l \ \ {\rm in} \ \ {\cal
D}'(\R^d).
\end{equation} We will finish the corollary if we show that for every
nonnegative function $\varphi\in C^\infty_c(\Rd)$ it holds $\int_\Rd (\omega
- q(\mx;\u,\vv))\varphi d\mx\geq 0$. It holds

\begin{align}
\label{2014_1}
\int_\Rd (\omega - q(\mx;\u,\vv))\varphi d\mx & = \int_\Rd (\omega - q(\mx;\u_r,\vv_r))\varphi d\mx\\
&+\int_\Rd (q(\mx;\u_r,\vv_r) - q(\mx;\u_r,T_l(\vv_r)))\varphi d\mx \nonumber\\
& + \int_\Rd (q(\mx;\u_r,T_l(\vv_r)) - \omega_l )\varphi d\mx + \int_\Rd (\omega_l - q(\mx; \u,{\bf h}^l))\varphi d\mx\nonumber\\
& + \int_\Rd (q(\mx; \u, {\bf h}^l) - q(\mx;\u,\vv))\varphi d\mx.
\nonumber
\end{align} Since the left hand side of \eqref{2014_1} does not depend on $r$
and $l$, we can take $\limsup\limits_{l\to\infty}\lim\limits_{r\to
\infty}$ there. The first summand on the right hand side of the
expression goes to zero according to the assumptions of the
corollary; the third summand goes to zero according to the
definition of $\omega_l$; we have established in \eqref{l+} that the
fourth summand is nonnegative. Let us show that the second
summand in \eqref{2014_1} goes to zero:
\begin{align*}
\Big| \int_\Rd (q(\mx;\u_r,\vv_r) - q(\mx;\u_r,T_l(\vv_r)))\varphi d\mx\Big| &\leq \int_\Rd |\varphi \,{\bf Q}\u_r\cdot(\vv_r - T_l(\vv_r))| d\mx\\
&\leq  \|{\bf Q}\u_r \|_{{\rm L}^p}\|\varphi\,(\vv_r - T_l(\vv_r))\|_{{\rm L}^{p'}},
\end{align*}where we have used the H\"older inequality. Since $\vv_r - T_l(\vv_r)\to 0$ pointwise, according to the assumption $|\vv_r| \leq {\bf V}$
and the Lebesgue dominated convergence theorem, we conclude that
$\|\varphi\,(\vv_r - T_l(\vv_r))\|_{{\rm L}^{p'}} \to 0$ as $l,r \to
\infty$ (or as $l\to \infty$ uniformly with respect to $r$).

\noindent For the last summand, we will proceed in a similar manner. Let us notice that we can write
\begin{align*}
q(\mx;\u,{\bf h}^l) - q(\mx;\u,\vv) &= {\bf Q}\u\cdot ({\bf h}^l - \vv)\\
&= {\bf Q}\u\cdot \big( ({\bf h}^l - T_l(\vv_r)) + (T_l(\vv_r) - \vv_r) + (\vv_r - \vv)\big).\\
\end{align*}
The first and the last summand on the right hand side of the last
expression will go to zero according to the assumptions of the
corollary. Concerning the second summand, from the Lebesgue dominated convergence theorem as before, we conclude
$\limsup\limits_{l\to\infty}\lim\limits_{r\to \infty}\|(T_l(\vv_r) -
\vv_r)\varphi\|_{L^1(\R^d)}=0$. This concludes the proof.
\end{proof}

\begin{remark}
The condition concerning existence of the dominating function ${\bf V}$ from the previous theorem might look superfluous. However, as the following example shows, we cannot avoid it. Indeed, consider the case $d=N=1$, $a=a_{111}=0$. Let
$$
u_r(\mx)=v_r(\mx)=\begin{cases}
r, & |x|<r^{-2}\\
0, & |x|\geq r^{-2}
\end{cases}.
$$ Then, $\|u_r\|_2=2$ for all $r\in \N$. Clearly, $u_r=v_r\rightharpoonup 0$ weakly as $r\to \infty$, while $T_l(u_r)\to 0$ as $r\to \infty$ strongly in $L^2(\R)$ for every $l\in \N$. Therefore, the $H$-distributions $\mu_l$ corresponding to the sequences $(u_r)$ and $(T_l(v_r))$ are trivial: $\mu_l \equiv 0$. Thus, the strong consistency condition is satisfied with the equality sign, but $q(u_r,v_r)=u_r^2 \rightharpoonup 2\delta(\mx)\neq 0=q(0,0)$.

We would like to thank to the referee for this example.
\end{remark}

In a conclusion of the section, we would like to make a comment
concerning a connection between the standard consistency condition
and, at least at first sight stronger, the strong consistency
condition. To this end, note that we can rewrite the consistency
condition \eqref{(3)} in the following form (we shall omit the
second order derivatives since they have no influence on the
reasoning below):
$$
\Lambda_{{\cal F}}=\Big\{{\mib\lambda}:\R^d\times S^{d-1}\to \R^N: \;
\sum\limits_{j=1}^N \sum\limits_{k=1}^\nu a_{s j k}(\mx)\xi_k\lambda_j(\mx,\mxi)=0, \; s=1,\dots,M \Big\}
$$ and
$$
q(\mx;{\mib\lambda}(\mx,\mxi),{\mib\lambda}(\mx,\mxi))\geq 0 \ \
\text{ for all ${\mib\lambda} \in \Lambda_{{\cal F}}$ and all $(\mx,\mxi) \in \R^d\times S^{d-1}$}.
$$ Having such a representation of the consistency condition, it seems
reasonable to ask whether $\Lambda_{{\cal D}}$ is a closure of
$\Lambda_{{\cal F}}$ in the sense of distributions. If this is the
case, the generalisation presented here holds under the standard
consistency condition. At this moment, we do not have any answer to
this question.

However, we shall present an example showing that our approach can
be used.

\section{Application}

Let us consider the non-linear parabolic type equation

\begin{align}
\label{zg5} L(u)=\pa_t u-\sum\limits_{k,l=1}^d \pa_{x_l x_k}
(a_{kl}(t,\mx)g(t,\mx,u))
\end{align} on $\Omega=\langle0,\infty\rangle\times V$, where $V$ is an open subset of $\R^d$. We assume that
\begin{align*}
u\in L^p(\Omega), \ \ g(t,\mx,u)\in L^q(\Omega), \ \ 1<p,q,\\
a_{kl}\in L^s_{loc}(\Omega), \ \ {\rm where} \ \ 1/p+1/q+1/s<1,
\end{align*} and that the matrix function ${\bf A}=[a_{kl}]_{k,l=1,\dots,d}$ is strictly positive
definite on $\Omega$, i.e.
$$
{\bf A}\mxi \cdot \mxi >0, \ \  \mxi\in \R^d\setminus\{{\mib 0}\},  \ \ \text{a.e.}\ (t,\mx)\in \Omega.
$$
Furthermore, assume that $g$ is a Carath\`{e}odory function and non-decreasing with respect to the third variable.

The following theorem holds.

\begin{theorem}
\label{application} Assume that sequences

\begin{itemize}
\item $(u_r)$ and $g(\cdot,u_r)$ are such that $u_r, g(u_r)\in
L^2(\R^+\times\R^d)$ for every $r\in \N$;

\item that they are bounded in $L^p(\R^+\times\R^d)$, $p\in
\langle 1,2]$, and $L^q(\R^+\times\R^d)$, $q>2$, respectively, where
$1/p+1/q <1$;

\item $u_r \rightharpoonup u$ and, for some, $f\in
W^{-1,-2;p}(\R^+\times \R^d)$, the sequence
$$
L(u_r)=f_r \to f \ \ \text{ strongly in $W^{-1,-2;p}(\R^+\times
\R^d)$.}
$$

\end{itemize}
Under the assumptions given above, it holds
$$
L(u)=f \ \ {\rm in} \ \ {\cal D}'(\R^+\times \R^d).
$$

\end{theorem}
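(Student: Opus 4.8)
The plan is to reduce the whole statement to the identification of the weak limit of the nonlinearity. Passing to a subsequence, let $u_r\rightharpoonup u$ in $L^p(\R^+\times\R^d)$ and $g(\cdot,u_r)\rightharpoonup\tilde g$ in $L^q(\R^+\times\R^d)$. The operator $L$ is \emph{linear} in the pair $(u_r,g(\cdot,u_r))$, so I would pass to the limit termwise: $\partial_t u_r\to\partial_t u$ in ${\cal D}'$, and, since $a_{kl}\in L^s$ with $1/s+1/q\le1$, also $\partial_{x_kx_l}(a_{kl}g(\cdot,u_r))\to\partial_{x_kx_l}(a_{kl}\tilde g)$ in ${\cal D}'$. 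As $L(u_r)=f_r\to f$, this gives $\partial_t u-\sum_{k,l}\partial_{x_kx_l}(a_{kl}\tilde g)=f$; hence the theorem follows once I show $\tilde g=g(\cdot,u)$.

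To identify $\tilde g$, I would replace the sequences by $u_r-u\rightharpoonup0$ in $L^p$ and $w_r:=g(\cdot,u_r)-\tilde g\rightharpoonup0$ in $L^q$, and work on the parabolic manifold $\Pd=\{|\xi_0|^2+\sum_{k=1}^d|\xi_k|^4=1\}$ (weights $\alpha_0=1$ for $t$, $\alpha_k=2$ for $x$). Localising with a cutoff to meet the compact-support hypotheses, I would introduce the $H$-distribution $\mu$ of the pair $(u_r-u,w_r)$ from Theorem \ref{prop_repr1} (legitimate since $1/p+1/q<1$), and the nonnegative $H$-measure $\pi$ of $w_r$, which exists because $q>2$ makes $w_r$ bounded in $L^2_{\mathrm{loc}}$. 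Applying the localisation principle of Proposition \ref{localisation} to the linearised constraint $\partial_t(u_r-u)-\sum_{k,l}\partial_{x_kx_l}(a_{kl}w_r)\to0$ in $W^{-1,-2;p}$ — which fits \eqref{fract} for the vector $(u_r-u,w_r)$ once the principle is extended to $\partial_t$ and to the mixed second-order derivatives $\partial_{x_kx_l}$ — I would obtain the single relation
\begin{equation*}
(2\pi i\xi_0)\,\mu+(2\pi)^2\Big(\textstyle\sum_{k,l=1}^d a_{kl}\xi_k\xi_l\Big)\pi=0\qquad\text{on }\Pd. \tag{$\star$}
\end{equation*}
This extension is routine: by the parabolic homogeneity the multiplier with symbol $\xi_k\xi_l\big(|\xi_0|^2+\sum_m|\xi_m|^4\big)^{-1/2}$ is bounded, its Marcinkiewicz constant being controlled through \eqref{c-mar} exactly as in Corollary \ref{m1}.

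The conclusion would then come from showing that the product passes to the limit, i.e.\ $\langle\mu,\phi\otimes1\rangle=\lim_r\int\phi\,(u_r-u)\,w_r=0$ for every $\phi\in\Cbc{\R^+\times\R^d;\R^+_0}$. Splitting $(\star)$ into real and imaginary parts (with $\pi\ge0$ real) gives $\xi_0\,\Re\mu=0$, so $\Re\mu$ is carried by $\{\xi_0=0\}$, while testing $(\star)$ against symbols concentrating on $\{\xi_0=0\}$ and using the strict ellipticity ${\bf A}\mxi\cdot\mxi=\sum_{k,l}a_{kl}\xi_k\xi_l\ge\delta>0$ there (on $\Pd$, $\xi_0=0$ forces $\mxi\neq0$) forces $\pi(\{\xi_0=0\})=0$. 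Now take an even cutoff $\psi_\delta$ equal to $1$ on $\{\xi_0=0\}$ and supported in $\{|\xi_0|<\delta\}$, whose Marcinkiewicz constant is uniform in $\delta$ (condition \eqref{c-mar} is scale invariant). Since $\Re\mu$ lives on $\{\xi_0=0\}$ and the imaginary part of $\mu$ is odd, $\langle\mu,\phi\otimes\psi_\delta\rangle=\langle\mu,\phi\otimes1\rangle$ exactly; on the other hand, by H\"older, $|\langle\mu,\phi\otimes\psi_\delta\rangle|\le C\limsup_r\|{\cal A}_{\psi_\delta}w_r\|_{L^{p'}}$, where $\limsup_r\|{\cal A}_{\psi_\delta}w_r\|_{L^2}^2\lesssim\pi(\{|\xi_0|<\delta\})\to0$ as $\delta\to0$ while $\|{\cal A}_{\psi_\delta}w_r\|_{L^q}$ stays bounded ($q>p'$). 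Interpolating $L^2$--$L^q$ drives the $L^{p'}$ norm to $0$, whence $\langle\mu,\phi\otimes1\rangle=0$. Finally, monotonicity of $g$ in its third argument gives $(u_r-c)(g(\cdot,u_r)-g(\cdot,c))\ge0$ for every constant $c$; passing to the limit (now that the product passes) yields $(u-c)(\tilde g-g(\cdot,c))\ge0$ a.e.\ for all $c$, and the Minty device together with continuity of $g(t,\mx,\cdot)$ forces $\tilde g=g(\cdot,u)$.

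The routine parts are the termwise weak passage of the first paragraph and the multiplier estimates. The two places where I expect real work are: first, rigorously extending Proposition \ref{localisation} to cover $\partial_t$ and the off-diagonal $\partial_{x_kx_l}$ (verifying that the associated symbols meet \eqref{c-mar}); and, above all, the step turning the single localisation relation $(\star)$ into the vanishing of the defect $\langle\mu,\phi\otimes1\rangle$. Monotonicity by itself only yields $\langle\mu,\phi\otimes1\rangle\ge0$; the reverse inequality is precisely where the strict positive-definiteness of ${\bf A}$ (through $\pi(\{\xi_0=0\})=0$) must be coupled to an $L^p$--$L^q$ interpolation bounding the cross term by the diagonal $H$-measure. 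The delicacy is that $u_r-u$ is only $L^p$ with $p\le2$, so no diagonal $H$-measure is available on that factor; it is the uniform $L^q$-bound on $g(\cdot,u_r)$ with $q>2$ that makes the interpolation, and hence the whole argument, go through.
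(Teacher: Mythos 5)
Your core compensated-compactness step is correct in outline but takes a genuinely different route from the paper's. Both arguments linearise the equation around the weak limits, form the cross $H$-distribution $\mu=\mu_{12}$ of $(u_r-u,\,g(\cdot,u_r)-w)$ and the diagonal object $\pi=\mu_{22}$, and derive the same localisation relation (your $(\star)$ is exactly \eqref{loc_par}). To extract $\langle\mu,\phi\otimes 1\rangle=0$ from it, the paper never uses positivity of $\pi$: it produces a \emph{second} relation \eqref{pomocna2} by testing the equation against ${\cal A}_{(1-\theta)\psi_\Pd/\rho_{\Pd}}(\varphi u_{1r})$ --- this is precisely where the hypothesis $u_r,\,g(\cdot,u_r)\in L^2$ enters, through Plancherel --- then kills both diagonal terms by the parity argument of Lemma \ref{r-c}, and finally divides by the strictly positive even symbol $2\pi\xi_0^2+4\pi^2\sum_{k,l}\xi_k\xi_l a_{kl}$. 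You instead keep only the first relation and exploit that $\pi$ is a nonnegative (real) measure, which is legitimate since $q>2$ makes $\phi w_r$ bounded in $L^2$, so $\langle\pi,\phi^2\otimes\psi\rangle=\lim_r\int|\F(\phi w_r)|^2\psi_\Pd\geq 0$; splitting $(\star)$ into real and imaginary parts gives $\xi_0\Re\mu=0$ and $4\pi^2 A\pi=2\pi\xi_0\Im\mu$, whence $\pi$ does not charge $\{\xi_0=0\}$ while $\Re\mu$ is carried there, and your concentration--interpolation argument closes the loop. This is viable, but one point must be made explicit: to get $\pi(\{\xi_0=0\})=0$ you need $\langle\Im\mu,\phi\otimes\xi_0\psi_\delta\rangle\to 0$, which does \emph{not} follow from the sup-norm bound $|\xi_0\psi_\delta|\leq\delta$ ($H$-distributions are only continuous in the $C^{d+1}(\Pd)$, resp.\ multiplier, norm); it does follow from the scale invariance you invoke, since $\xi_0\psi_\delta=\delta\,G(\xi_0/\delta)$ with $G(y)=y\chi(y)$ fixed, so its Marcinkiewicz constant in \eqref{c-mar} is $O(\delta)$. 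What each route buys: yours dispenses with the paper's cross-testing step and hence, apparently, with the individual $L^2$ hypothesis on $u_r$; the paper's dispenses with nonnegativity of $\pi$ and with any concentration argument, at the price of that extra testing.

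Your final identification step, however, has a genuine gap. The Minty device with constants requires passing to the limit in $\int\phi\,(u_r-c)(g(\cdot,u_r)-g(\cdot,c))\,d\mx\,dt\geq 0$, and the terms $u_r\,g(\cdot,c)$ and $c\,g(\cdot,c)$ involve the fixed function $(t,\mx)\mapsto g(t,\mx,c)$, about which the hypotheses say nothing: $g$ is only Carath\'eodory and nondecreasing, and only the composed sequence $g(\cdot,u_r)$ is assumed to have any integrability. Thus $g(\cdot,c)$ need not lie in $L^1_{loc}$, and the inequality you want to pass to the limit in need not even be well defined, let alone stable under weak convergence of $u_r$ in $L^p$ (which would require $g(\cdot,c)\in L^{p'}_{loc}$). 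The paper circumvents exactly this --- it even remarks that no assumption on $g$ itself is used before this point --- via Young measures: with $\eta_{t,\mx}$ the Young measure of $(u_r)$, equi-integrability of $(g(\cdot,u_r))$ in $L^q$ yields $w=\int g(\cdot,\lambda)\,d\eta_{t,\mx}(\lambda)$ and $uw=\int\lambda g(\cdot,\lambda)\,d\eta_{t,\mx}(\lambda)$, hence $\int(\lambda-u)\bigl(g(\cdot,\lambda)-g(\cdot,u)\bigr)d\eta_{t,\mx}(\lambda)=0$, and monotonicity forces $g(\cdot,\lambda)=g(\cdot,u)$ on $\supp\eta_{t,\mx}$, so $w=g(\cdot,u)$. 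You should replace the Minty step by this argument, or else add the unstated assumption $g(\cdot,\cdot,c)\in L^{q}_{loc}$ for every constant $c$.
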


\begin{proof}
Let us first define all functions on $\R\times\R^d$ by extending
them with 0 out of $\R^+\times \R^d$. Denote by $w$ a distributional
limit of $g(\cdot,u_r)$ along not relabeled subsequence. Our first
step is to show that the product of $u_r$ and $g(\cdot,u_r)$
converges to $uw$ in the sense of distributions. To do that, denote
\begin{equation}
\label{wl} u_{1r}=u_r-u, \ \ u_{2r}=g(\cdot,u_r)-w.
\end{equation} Note that the following
sequence of equations is satisfied
\begin{equation}
\label{zg6} \pa_t u_{1r}-\sum\limits_{k,l=1}^d \pa_{x_l
x_k}(a_{kl}u_{2r})=f_r-f,
\end{equation} and that  $f_r-f$ tends to zero strongly in $W^{-1,-2;p}(\R^+\times\R^d)$. Introduce

\begin{equation}
\label{skup_D}
\Lambda_{{\cal D}}\!=\!\Big\{{\mib\mu}=(\mu_1,\mu_2)\!\in\!
L_{w\star}^{s'}(\R^+\times\R^d;C^{d+1}(\Pd)')^2\!: \, -2i\pi \xi_0
\mu_1+4\pi^2\sum\limits_{k,l=1}^d \xi_k \xi_l
a_{kl}\mu_{2}\!=\!0\Big\},
\end{equation} and remark that, according to the localisation principle given in
Proposition \ref{localisation},
\begin{equation}\label{mu_iz_lambdaD}(\mu_{12},\mu_{22})\in \Lambda_{{\cal
D}}\end{equation}
for $H$-distributions $\mu_{12}$ and $\mu_{22}$,
corresponding to sequences $(\phi u_{1r})$ and $(\phi u_{2r})$,
and $(\phi u_{2r})$ and $(\phi u_{2r})$, respectively. Above, $\phi\in C^2_c(\R^+\times\R^d)$ is fixed.

From the localisation principle, for $\psi \in C^{d+1}(\Pd)$ (here and
in the sequel, symbols are real functions) and $\varphi\in
C^2_c(\Rd)$, it holds


\begin{align}
\label{loc_par} &i \langle -2\pi \xi_0 \psi \varphi, \mu_{12}
\rangle+\langle 4\pi^2\sum\limits_{k,l=1}^d \xi_k \xi_l
a_{kl}(\cdot,\cdot)\psi \varphi, \mu_{22} \rangle=0 .
\end{align} Remark that for any $\psi \in C^{d+1}(\Pd)$ the function
$f_\psi=\langle \psi, \mu_{j2} \rangle$ is in $L^{s'}(\R^+\times
\R^d)$, $j=1,2$. For the functions $f_\psi$, where $\psi$ belongs to a
dense countable subset $E$ of $C^{d+1}(\Pd)$ containing a dense subset of
odd and even functions (which we may choose since $C^{d+1}(\Pd)$ is
separable and we can represent every function as a sum of even and
odd functions
$\psi(\mxi)=\frac{1}{2}(\psi(\mxi)+\psi(-\mxi))+\frac{1}{2}(\psi(\mxi)-\psi(-\mxi))$),
and the functions $a_{kl}$, $k,l=1,\dots,d$, denote by $D\subseteq
\R^+\times \R^d$ the set of their common Lebesgue points (which is of full measure).

Now, fix $(t_0,\mx_0)\in D$. According to the Plancherel theorem, we
get
\begin{equation}\label{pomocno}
\int \overline{\varphi v}{\cal A}_\psi(\varphi v) = \int \overline{\widehat{\varphi v}}\; \psi\widehat{\varphi v}  \in \R
\end{equation} for all $v\in L^2(\R^+\times\Rd)$, real bounded multipliers $\psi$,
and $\varphi\in C^2_c(\Rd)$. From here we conclude that
\begin{equation}
\label{real} \langle 4\pi^2\sum\limits_{k,l=1}^d \xi_k \xi_l
a_{kl}(t_0,\mx_0)\psi \varphi, \mu_{22} \rangle \in \R
\end{equation} for any real multiplier $\psi$.
Indeed, for a scalar matrix ${\bf A}(t_0,\mx_0)$, taking into
account that $4\pi^2{\bf A}(t_0,\mx_0)\mxi\cdot\mxi\geq 0$, we
notice that $4\pi^2{\bf
A}(t_0,\mx_0)\mxi\cdot\mxi\psi\boxtimes\varphi$ is a real function
in $\mxi$ (where $\varphi$ is constant with respect to $\mxi$).
Insert symbol $4\pi^2({\bf A}(t_0,\mx_0)\mxi\cdot\mxi\psi /
\rho_{\Pd}) \boxtimes\varphi$ and sequences $u_r = v_r = \phi u_{2r}$
into definition \eqref{rev2} of H-distributions where
$$
\rho_{\Pd}=(\xi_0^2+\sum\limits_{j=1}^d \xi_j^4)^{1/2}.
$$ Now, the
claim follows once we notice that equation \eqref{pomocno} gives us
a limit of real numbers.

On the other hand, from
Lemma \ref{r-c}, we conclude that for any odd $\psi$, the function
\begin{equation}
\label{complex} \langle 4\pi^2\sum\limits_{k,l=1}^d \xi_k \xi_l
a_{kl}(t_0,\mx_0)\psi \varphi, \mu_{22} \rangle \in i \R.
\end{equation} Thus, from \eqref{real} and \eqref{complex}, we
conclude that for any odd function $\psi$ it must be
\begin{equation}
\label{zakljucak}\langle 4\pi^2\sum\limits_{k,l=1}^d \xi_k \xi_l
a_{kl}(t_0,\mx_0)\psi \varphi, \mu_{22} \rangle=0.
\end{equation}

%

\noindent Taking into account \eqref{zakljucak}, assuming $\psi\in E$, and inserting
$(t,\mx)=(t_0,\mx_0)$ into \eqref{loc_par}, we conclude that for all points from $D$, it
holds
\begin{equation}
\label{xi0} \langle -2\pi \xi_0 \psi, \mu_{12}(t_0,\mx_0,\cdot)
\rangle=0.
\end{equation} Now, since $u_r\in L^2(\R^+\times \R^d)$ for every
$r\in \N$, we can test \eqref{zg6} by $\varphi \overline{{\cal
A}_{(1-\theta)\psi_\Pd/\rho_{\Pd}}(\varphi u_{1r})}$ where $\theta$
is a compactly supported even smooth function equal to one in a
neighborhood of zero. Then, we let $r\to \infty$ and use the
Plancherel theorem to obtain a relation similar to \eqref{loc_par}
(remark that ${{\cal A}_{(1-\theta)\psi_\Pd/\rho_{\Pd}}}$ is a
compact $L^p\to L^p$ operator for any $p>1$):

\begin{align}
\label{pomocna2} &\lim\limits_{r\to \infty}\int_{\R^{d+1}}-2\pi
i\frac{(1-\theta(\mxi))\xi_0}{\rho_{\Pd}(\mxi)}
\psi_{\Pd}(\mxi)\F({\varphi u_{1r}})\overline {\F({\varphi
u_{1r}})}d\mxi\\&\qquad\qquad\qquad\qquad+\langle
4\pi^2\sum\limits_{k,l=1}^d \xi_k \xi_l a_{kl}(\cdot,\cdot)\psi
\varphi, \mu_{12} \rangle=0, \nonumber
\end{align}where, as usual, $\psi_P=\psi\circ \rho_{\Pd}$. Denote by
\begin{align}
\label{pom5} I_r(\psi_\Pd)\!&=\!\int_{\R^{d+1}} -2\pi
i\frac{(1-\theta(\mxi))\xi_0}{\rho_{\Pd}(\mxi)}
\psi_{\Pd}(\mxi)\F({\varphi u_{1r}})\overline {\F({\varphi
u_{1r}})}d\mxi\!\\ &=\!\int_{\R^{d+1}}-2\pi
i\frac{(1-\theta(\mxi))\xi_0}{\rho_{\Pd}(\mxi)}
\psi_{\Pd}(\mxi)|\F({\varphi u_{1r}})|^2d\mxi. \nonumber
\end{align}We
shall prove that for every even $\psi$
\begin{equation}
\label{pom3}  I_r(\psi_\Pd) = 0.
\end{equation} Clearly, for any
real $\psi$, it holds (see \eqref{pom5})
\begin{equation}
\label{pom4} I_r(\psi_{\Pd}) \in i \R.
\end{equation} However, from Lemma \ref{r-c}, we conclude that for
any even $\psi$, it holds
\begin{align*}
I_r(\psi_{\Pd})&=\int_{\R^+\times \R^{d}}
\varphi(\mx)u_{1r}(t,\mx) \pa_t \overline{\big({\cal A}_{(1-\theta)\psi_\Pd/\rho_{\Pd}}
(\varphi u_{1r})\big)(t,\mx)} dt d\mx\\&=
\int_{\R^+\times\R^{d}} \varphi(\mx)u_{1r}(t,\mx) \pa_t
{\big({\cal A}_{(1-\theta)\psi_\Pd/\rho_{\Pd}} (\varphi u_{1r})\big)(t,\mx)} dt d\mx \in
\R.
\end{align*}Being both purely real for any even $\psi$ and purely imaginary for any $\psi$ (see
\eqref{pom4}), it follows that $I_r(\psi_\Pd)$ must be zero for any
even $\psi$. From here, \eqref{pom3} follows.

Now, since the function $\varphi\in C_c^2(\R^{d+1})$ is arbitrary, from \eqref{pomocna2} we get the following relation for every (Lebesgue) point $(t,\mx)\in D$ and $\psi \in E$:

\begin{equation}
\label{xik} \langle 4\pi^2\sum\limits_{k,l=1}^d \xi_k \xi_l
a_{kl}(t,\mx)\psi_2, \mu_{12}(t,\mx,\cdot) \rangle =0.
\end{equation}

Since the set $D$ is of full measure, summing the results from \eqref{xi0} and \eqref{xik}, we conclude that for any odd symbol
$\psi_1 \in E$ and even symbol $\psi_2\in E$, we have

\begin{align*}
&\langle 2\pi \xi_0 \psi_1 \varphi, \mu_{12} \rangle+ \langle
4\pi^2\sum\limits_{k,l=1}^d \xi_k \xi_l a_{kl}(t,\mx)\psi_2 \varphi,
\mu_{12} \rangle =0.
\end{align*}
Thus, by taking $\psi_1=\xi_0 \psi$ and $\psi_2=\psi$ for an even
symbol $\psi \in E$, we conclude:

\begin{equation}
\label{even} \Big\langle \Big( 2 \pi
\xi_0^2+4\pi^2\sum\limits_{k,l=1}^d \xi_k \xi_l
a_{kl}(t,\mx)\Big)\psi \varphi, \mu_{12} \Big\rangle=0.
\end{equation} Since $\mu_{12}$ is continuous on $L^s(\R^{d+1};C^{d+1}(\Pd))$, we conclude that \eqref{even} holds for any even $\psi\in C^{d+1}(\Pd)$.

Since the function
$$
f(t,\mx,\mxi)=\frac{\varphi}{2 \pi
\xi_0^2+4\pi^2\sum\limits_{k,l=1}^d \xi_k \xi_l a_{kl}} \in
L^s(\R^+\times\R^d;C^{d+1}(\Pd))
$$

\noindent is even with respect to the variable $\mxi$, we conclude
from \eqref{even} (we can put $f$ instead $\varphi \psi$ there) that
\begin{equation}
\label{local} \langle 1 \otimes \varphi, \mu_{12} \rangle=0.
\end{equation}

\noindent From
\eqref{mu_iz_lambdaD} and \eqref{local}, we conclude that the following bilinear form
$$
q(\mx;{\mib\lambda},{\mib\eta})=\lambda_{1}\eta_2, \ \
{\mib\lambda}=(\lambda_1,\lambda_2), \; {\mib\eta}=(\eta_1,\eta_2),
$$ satisfies the strong consistency condition with the set $\Lambda_{{\cal
D}}$ introduced in \eqref{skup_D}. Now we can apply Theorem \ref{ccpq} to conclude that
\begin{equation}
\label{bd11} q(\mx;(u_{1r},u_{2r}),(u_{2r},u_{2r})) = u_{1r} u_{2r} \rightharpoonup  0 = q(\mx;(0,0),(0,0)) \ \ {\rm in}
\ \ {\cal D}'(\R^+\times\R^d)
\end{equation} since both $u_{1r} = u_r - u$ and $u_{2r} = g(\cdot,u_r)-w$
weakly converge to $0$. Using the bilinearity of $q$, we conclude

\begin{equation}
\label{bd11f} u_{r} g(\cdot,u_r) \rightharpoonup u w \ \ {\rm in} \
\ {\cal D}'(\R^+\times\R^d).
\end{equation}

Our next step is to identify $g(\cdot,u)$ as a weak limit of
$g(\cdot,u_r)$. To do that we will employ the theory of Young
measures. Up to this moment we didn't need any assumption on the
function $g$ itself, only on the sequence $g(\cdot,u_r)$.

\noindent Denote by $\eta_{t,\mx}$ the Young measure associated to a
subsequence of the sequence $(u_r)$. Since $g$ is a Carath\`{e}odory
function, from \eqref{wl} and \eqref{bd11f}, it holds \cite{PP}:

\begin{equation}
\label{bd12}
\begin{cases}
u(t,\mx)=\int \lambda d \eta_{t,\mx}(\lambda), \\
w(t,\mx)=\int g(t,\mx,\lambda) d\eta_{t,\mx}(\lambda),
\end{cases}
\end{equation}
and
$$u(t,\mx) \int g(t,\mx,\lambda) d\eta_{t,\mx}(\lambda)=u(t,\mx) w(t,\mx)=\int
\lambda g(t,\mx,\lambda)d \eta_{t,\mx}(\lambda).$$

The latter equality implies
\begin{align}
\label{bd13} \int(\lambda-u(t,\mx))g(t,\mx,\lambda) &d\eta_{t,\mx}(\lambda)=\\
\notag &\int\Big(\lambda-u(t,\mx)\Big)
\Big(g(t,\mx,\lambda)-g(t,\mx,u(t,\mx))\Big)d\eta_{t,\mx}(\lambda)=0,
\end{align}
because

\begin{align}
\notag \int (\lambda - u) g(t,\mx,u)d\eta_{t,\mx}(\lambda)&=
g(t,\mx,u)\int \lambda d\eta_{t,\mx}(\lambda) - g(t,\mx,u)u\int d\eta_{t,\mx}(\lambda)\\
\notag &= g(t,\mx,u)u - g(t,\mx,u)u\\
\notag &= 0,
\end{align}
where function $u$ does not depend on $\lambda$ and we have used first equality in \eqref{bd12} and the fact that $\eta_{t,\mx}$ is a probability measure.

Since $g$ is non-decreasing with respect to
$\lambda$, we conclude from \eqref{bd13}
$$
g(t,\mx,\lambda)=g(t,\mx,u(t,\mx)) \ \ {\rm on} \ \ {\rm supp}
\eta_{t,\mx},$$
which implies
$$w(t,\mx)=\int g(t,\mx,\lambda)d\eta_{t,\mx}(t,\mx)=g(t,\mx,u(t,\mx)).
$$ From here, we finally conclude that
$$
L(u_r) \rightharpoonup L(u)=f \ \ {\rm in} \ \ {\cal D}'(\R^+\times\R^d).
$$ \end{proof}


%
%
%
%

\section{Acknowledgements}

The authors would like to thank to prof. Nenad Antoni\'{c} and to the unknown referee for their
generous help during the work on the article.

The research is supported by the bilateral project Transport in
highly heterogeneous media between Croatia and Montenegro, by the
Croatian Science Foundation's funding of the project "Weak
convergence methods and applications" number 9780, and by the
project "Advection-diffusion equations in highly heterogeneous
media" of the Montenegrian Ministry of Science. Part of the work was
carried out while M.Mi\v sur was visiting the Basque Center for
Applied Mathematics, within the frame of FP7--246775 NUMERIWAVES
project of the ERC.

\end{document}